\documentclass[10pt]{amsart}
\pdfoutput=1
\usepackage{amssymb,amscd,graphicx,enumerate,epsfig}
\usepackage[bookmarks=true]{hyperref}

\newtheorem{theorem}{Theorem}[section]

\newtheorem{proposition}[theorem]{Proposition}
\newtheorem{corollary}[theorem]{Corollary}

\newtheorem{lemma}[theorem]{Lemma}

\newcommand{\Theoremno}[2]{\noindent\textbf{Theorem #1.} \textit{#2}}

\theoremstyle{definition}
\newtheorem{definition}[theorem]{Definition}

\newcommand{\D}{\mathcal{D}}
\newcommand{\DV}{\mathcal{D}_V}
\newcommand{\DW}{\mathcal{D}_W}
\newcommand{\DVW}{\mathcal{D}_{VW}}

\newcommand{\WR}{\mathcal{WR}}
\begin{document}

\title{
 On critical Heegaard splittings of tunnel number two composite knot exteriors}
\author{Jungsoo Kim}
\date{11, Oct, 2012}

\begin{abstract}
	In this article, we prove that a tunnel number two knot induces a critical Heegaard splitting in its exterior if there are two weak reducing pairs such that each weak reducing pair contains the cocore disk of each tunnel.
	Moreover, we prove that a connected sum of two $2$-bridge knots or more generally that of two $(1,1)$-knots can induce a critical Heegaard splitting in its exterior as the examples of the main theorem.
	Finally, we give an equivalent condition for a weak reducing pair to be determined by a compressing disk uniquely when the manifold is closed, irreducible and the Heegaard splitting is of genus three and unstabilized.

\end{abstract}

\address{\parbox{4in}{
	Research Institute of Mathematics, Seoul National University\\ 
	1 Gwanak-ro, Gwanak-Gu, Seoul 151-747, Korea\\
	Tel. +82-2-880-6562, Fax. +82-2-877-8435}} 
	
\email{pibonazi@gmail.com}
\subjclass[2000]{57M50, 57M25}

\maketitle

\section{Introduction and result} 

Throughout this paper, all surfaces and 3-manifolds will be taken to be compact and orientable.
In \cite{Bachman1}, Bachman introduced the concept ``\textit{critical surfaces}'' and he proved several theorems about incompressible surfaces, the number of Heegaard splittings with respect to its genus, and the minimal genus common stabilization.
Since a critical surface has disjoint compressions on it's both sides, if the surface is a Heegaard surface, then the splitting is weakly reducible, i.e. a critical Heegaard splitting is a kind of weakly reducible splitting.
But in some aspects, it shares common properties with strongly irreducible splittings. 
For example, if the splitting is strongly irreducible or critical, then the manifold is irreducible (Lemma 3.5 of \cite{Bachman2}.) 
Indeed, the intersection of an incompressible surface $S$ and a Heegaard surface $F$ can be isotoped essential on both $S$ and $F$ if the splitting is critical or strongly irreducible (see Theorem 5.1 of \cite{Bachman1} and Lemma 6 of \cite{Schultens2}.) 
Bachman also proved Gordon's conjecture by using the series of generalized Heegaard splittings and critical Heegaard splittings (see \cite{Bachman2}.)
In his recent work \cite{Bachman2010}, he also introduced the concept \textit{``topologically minimal surfaces''}, where a strongly irreducible surface is an index $1$ topological minimal surface, and a critical surface is an index $2$ topological minimal surface, this is a way to regard strongly irreducible surfaces and critical surfaces in a unified viewpoint. 

Although critical Heegaard splittings have many powerful properties as proved in Bachman's recent works, it is not easy to determine whether a weakly reducible splitting is critical.
In \cite{JungsooKim1}, the author found a condition for an unstabilized, genus three Heegaard splitting of an irreducible manifold
to be critical by analysing the subcomplex of the disk complex for a Heegaard surface spanned by the disks where each disk among them can be contained in some weak reducing pair.

In this article, we prove the following theorem by using the method of \cite{JungsooKim1}.
\begin{theorem}(the Main Theorem)\label{the-main-theorem}
	Suppose that $M$ is an orientable, irreducible $3$-manifold and $H=(V,W;F)$ is a genus three, unstabilized Heegaard splitting of $M$.
	Suppose that there exist two weak reducing pairs $(D_0, E_0)$ and $(D_1, E_1)$.
	If both $D_0$ and $D_1$ are non-separating in $V$ and one is not isotopic to the other in $V$, then $H$ is critical.
\end{theorem}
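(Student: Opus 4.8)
The plan is to verify Bachman's definition of a critical surface directly. Recall that a surface $F$ is critical if the disks compressing $F$ can be partitioned into two sets $C_0$ and $C_1$ so that (i) every compressing pair $(D,E)$ with $D\in V$, $E\in W$ has both disks in the same $C_i$, (ii) each $C_i$ is nonempty, and (iii) there is at least one weak reducing pair using a disk from $C_0$ and at least one using a disk from $C_1$ (more precisely, for each $i$ there exist disks on opposite sides of $F$ both lying in $C_i$ that are disjoint). The two given weak reducing pairs $(D_0,E_0)$ and $(D_1,E_1)$ are the natural seeds for $C_0$ and $C_1$. So first I would set up the combinatorial framework from \cite{JungsooKim1}, working inside the subcomplex of the disk complex of $F$ spanned by disks that lie in some weak reducing pair, and recall exactly which structural results about that subcomplex are available.

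The core of the argument should be a connectivity/separation analysis. My plan is to show that the hypothesis ``$D_0,D_1$ are non-separating in $V$ and non-isotopic'' forces the part of the disk complex coming from weak reducing pairs to split into (at least) two pieces, one containing $(D_0,E_0)$ and the other containing $(D_1,E_1)$, with no compressing pair bridging them. Concretely, I would assign each compressing disk to $C_0$ or $C_1$ according to which of the two weak reducing configurations it is ``compatible'' with, and then check Bachman's axioms one at a time. The genus-three and unstabilized hypotheses enter to control the possible configurations: unstabilized rules out the degenerate case where a single disk sees both sides, and genus three bounds the complexity so that the classification of weak reducing pairs (and the disks that can appear in them) from \cite{JungsooKim1} applies.

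The main obstacle, I expect, is verifying the non-bridging condition: that there is no compressing pair with one disk in $C_0$ and the other in $C_1$. This is exactly where the non-separating and non-isotopy hypotheses must do real work, because without them $D_0$ and $D_1$ could be joined through a common disk and the two halves would collapse together, destroying criticality. I would handle this by an innermost-disk / band-surgery argument on $V$: if some disk were compatible with both configurations, I would surger $D_0$ and $D_1$ against it to produce an isotopy between them or a separating disk, contradicting the hypothesis. Establishing that the two seed pairs genuinely lie in distinct components, rather than merely appearing distinct, is the delicate point and is likely the technical heart taken from \cite{JungsooKim1}.

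A subtlety I would watch carefully is that criticality requires compressions on \emph{both} sides of $F$, so I must ensure each of $C_0$ and $C_1$ contains disks in $V$ as well as disks in $W$; the pairs $(D_0,E_0)$ and $(D_1,E_1)$ supply this automatically, but I would confirm that the assignment of the $E$-side disks to the two classes is consistent and exhausts the relevant disks. Finally, I would assemble these verifications to conclude that the partition $\{C_0,C_1\}$ satisfies Bachman's definition, hence $H$ is critical.
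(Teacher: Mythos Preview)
Your overall architecture matches the paper's: package the two seed pairs into two classes via a compatibility relation on weak reducing pairs, and invoke the criterion (Proposition~\ref{prop-Bachman}, equivalently Bachman's Lemma~8.5) that two non-compatible weak reducing pairs force criticality. So the reduction to ``$(D_0,E_0)$ and $(D_1,E_1)$ are not weakly compatible'' is exactly right.

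Where your plan diverges, and where it is not yet a proof, is the mechanism for the non-bridging step. You propose an innermost-disk/band-surgery argument: if some disk were compatible with both seeds, surger $D_0$ and $D_1$ against it to force an isotopy or produce a separating disk. But producing a separating disk would not contradict the hypothesis, and more importantly a single bridging disk is not what you must rule out---weak compatibility allows arbitrarily long chains $\Delta_0-\cdots-\Delta_m$ of D- and E-faces in $\DVW$, so the $D$-disk can change many times along the way with $E$-faces interposed. A local surgery on the endpoints does not obviously control such a chain.

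The paper handles this via the Key Lemma (Lemma~\ref{key-lemma}), whose proof is structural rather than surgical. One shows that every D-face sits in a \emph{D-cluster} with a unique non-separating ``center'' $D$-disk and separating ``hand'' $D$-disks, that each hand is a band-sum of two parallel copies of the center, and crucially that the center is \emph{uniquely determined} by any hand (Lemma~\ref{corollary-corollary-pants}). An induction along the minimal chain $P_0-\cdots-P_n$ of cluster-blocks then shows that every $D$-disk appearing in the chain is either isotopic to $D_0$ (if non-separating) or a band-sum of two copies of $D_0$ (if separating). Since $D_1$ is non-separating, weak compatibility would force $D_1\simeq D_0$, contradicting the hypothesis. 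This uniqueness-of-center rigidity is the genuine content you would need to supply; your surgery sketch does not yet capture it.
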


As a direct application of the Main Theorem, we get the following corollary.

\begin{corollary}\label{corollary-tunnelnumber-two}
	Suppose that $K$ is a tunnel number two knot and $\{t_1,t_2\}$ be a tunnel system of $K$.
	Let $H=(V,W;F)$ be the induced Heegaard splitting of the exterior of $K$ by this tunnel system.
	If there are weak reducing pairs $(D_0, E_0)$ and $(D_1, E_1)$ such that $D_0$ and $D_1$ come from the cocores of $N(t_1)$ and $N(t_2)$ respectively, then $H$ is critical.
	Moreover, every incompressible surface $S$ in the exterior can be isotoped so that the intersection $S\cap F$ is essential in both $S$ and $F$.
\end{corollary}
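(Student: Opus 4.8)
The plan is to deduce the corollary directly from the Main Theorem, so the real work reduces to checking that the induced splitting $H=(V,W;F)$ satisfies the four hypotheses stated there, and then invoking the cited intersection result for the final sentence. Writing $M$ for the exterior of $K$, I would first recall the standard picture of the tunnel system: the handlebody $V$ is a regular neighborhood $N(K\cup t_1\cup t_2)$, obtained from the solid torus $N(K)$ by attaching the two $1$-handles $N(t_1)$ and $N(t_2)$, so $V$ has genus three and $H$ is a genus three splitting. Since $K$ has tunnel number \emph{exactly} two, this splitting realizes the minimal genus and is therefore unstabilized. The exterior of a knot in $S^3$ is orientable and irreducible, which supplies the ambient hypotheses, and the two weak reducing pairs $(D_0,E_0)$ and $(D_1,E_1)$ are provided by assumption.

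The substantive point is to verify the disk hypothesis of the Main Theorem: that $D_0$ and $D_1$ are non-separating in $V$ and non-isotopic there. Here $D_i$ is isotopic to the cocore disk of the handle $N(t_i)$. Cutting $V$ along the cocore of $N(t_1)$ simply deletes that handle and leaves $N(K)\cup N(t_2)$, a connected genus two handlebody; hence $D_0$ is non-separating, and the identical argument applies to $D_1$. For non-isotopy I would pass to homology: the classes dual to $D_0$ and $D_1$ in $H_2(V,\partial V)$ are the two distinct basis elements dual to the cores of $t_1$ and $t_2$ in $H_1(V)$, so $D_0$ and $D_1$ cannot be isotopic in $V$. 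With all hypotheses confirmed, the Main Theorem yields that $H$ is critical.

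The ``moreover'' clause is then immediate from the general property of critical surfaces recalled in the introduction: by Theorem 5.1 of \cite{Bachman1}, any incompressible surface $S$ in a manifold carrying a critical Heegaard surface $F$ can be isotoped so that $S\cap F$ is essential in both $S$ and $F$. Applying this to the critical splitting just obtained finishes the proof.

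Since the corollary is an application rather than a new result, I do not expect a genuine obstacle; the only place demanding care is the claim that the two cocore disks are non-isotopic in $V$, which the homological duality argument settles cleanly, and the verification that the genus three splitting is unstabilized, which rests on $K$ having tunnel number precisely two.
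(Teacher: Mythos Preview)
Your proof is correct and follows exactly the paper's approach: verify unstabilized from minimal tunnel number, check that the cocores are non-separating and non-isotopic in $V$, apply the Main Theorem, and cite Theorem~5.1 of \cite{Bachman1} for the last sentence. One terminological slip worth fixing: in the exterior $M=S^3-\operatorname{int}N(K)$ the piece $V$ is the \emph{compression body} $(\partial N(K)\times I)\cup N(t_1)\cup N(t_2)$, not the handlebody $N(K\cup t_1\cup t_2)$; your non-separating and homological non-isotopy arguments go through verbatim in this compression body, so the logic is unaffected.
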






In Appendix \ref{section-unique-wr}, we prove the following theorem.\\

\Theoremno{A.1}{
	Suppose $F$ is an unstabilized, genus three Heegaard surface in a closed, orientable, irreducible 3-manifold.
	Then either $F$ is the amalgamation of two genus $2$ Heegaard surfaces over a torus, or every compressing disk for F belongs to at most one weak reducing pair.
}\\

In Section \ref{section2}, we introduce basic definitions and properties about the disk complex and ``\textit{$n$-compatible weak reducing pairs}'' and prove Theorem \ref{the-main-theorem} and Corollary \ref{corollary-tunnelnumber-two}.
In Section \ref{section-2-bridges}, we prove a connected sum of two $2$-bridge knots induces a critical Heegaard splitting in its exterior by using the Lustig and Moriah's result in \cite{LustigMoriah}.
In Section \ref{section-1-1-knots}, more generally,  we prove a connected sum of two $(1,1)$-knots induces a critical Heegaard splitting in its exterior by using the Moriah's result in \cite{Moriah2004}.
Finally, we induce an equivalent condition for a weak reducing pair to be determined by a compressing disk when the manifold is closed in Appendix \ref{section-unique-wr}.

\section{The disk complex and the proof of the Main Theorem\label{section2}}

\begin{definition}
	Let $F$ be a surface in a compact, orientable $3$-manifold $M$.
	Then the \emph{disk complex} $\D(F)$ is defined as follows:
	\begin{enumerate}
		\item Vertices of $\D(F)$ are isotopy classes of compressing disks for $F$.
		\item A set of $m+1$ vertices forms an $m$-simplex if there are representatives for each
that are pairwise disjoint.
	\end{enumerate}
	Now we consider $F$ as a Heegaard surface of the splitting $M=(V,W;F)$.
	Let $\DV(F)$ and $\DW(F)$ be the subcomplexes of $\D(F)$ spanned by compressing disks in $V$ and $W$ respectively.
	We call these complex ``the disk complexes of $V$ and $W$''.
	McCullough proved that the disk complex of a non-trivial, non-punctured (i.e. with no boundary components homeomorphic to $2$-sphere) compression body is contractible (see \cite{McCullough1991}.)
	
	Let $\DVW(F)$ be the subcomplex of $\D(F)$ spanned by the simplices that have at least one vertex in $V$, and at least one vertex in $W$.
	In this article, we will use letter ``D'' to denote compressing disks in $V$ and ``E'' to denote those in $W$.
%
%
\end{definition}

\begin{definition}
	Let $(D_0, E_0)$, $\cdots$, $(D_n, E_n)$ be weak reducing pairs of a Heegaard splitting $H=(V,W;F)$.
	Suppose that one of the following conditions holds for every choice of $(D_i, E_i)$ and $(D_j, E_j)$ ($0\leq i\neq j\leq n$).
	\begin{enumerate}
		\item $D_i$ is isotopic to $D_j$ in $V$, $E_i\cap E_j=\emptyset$, and $E_i$ is not isotopic to $E_j$ in $W$.
		\item $E_i$ is isotopic to $E_j$ in $W$, $D_i\cap D_j=\emptyset$, and $D_i$ is not isotopic to $D_j$ in $V$.
	\end{enumerate}
	Then we call these weak reducing pairs \emph{$n$-compatible weak reducing pairs}. \footnote{An $n$-simplex of $\WR$ in \cite{JungsooKim1} is exactly the same as $n$-compatible weak reducing pairs in this article.}
	This means that
	\begin{enumerate}
		\item exactly one compressing disk for a weak reducing pair among them belongs to each of the other $n$-weak reducing pairs (the choice of ``exactly one compressing disk'' depends on the choice of the corresponding weak reducing pair among the other weak reducing pairs), and
		\item we can draw all compressing disks coming from these $(n+1)$-weak reducing pairs without intersections in $M$ up to isotopy.
	\end{enumerate}
	We define a \emph{$0$-compatible weak reducing pair} as a single weak reducing pair.
	Obviously, if there are $n$-compatible weak reducing pairs, then any choice of $k$-weak reducing pairs among them gives $(k-1)$-compatible weak reducing pairs for all $1\leq k\leq n+1$. 
\end{definition}

By the next lemma, we can identify $n$-compatible weak reducing pairs as a certain kind of $(n+1)$-subsimplex in $\DVW$.

\begin{lemma}[Lemma 2.6 of \cite{JungsooKim1}]\label{lemma2-JungsooKim2012-A}
	Suppose that there are $n$-compatible weak reducing pairs.
	Then these $(n+1)$-weak reducing pairs have the form (a) $(D_0,E)$, $\cdots$, $(D_n,E)$ or (b) $(D,E_0)$, $\cdots$, $(D,E_n)$.
	Therefore, we get an $(n+1)$-subsimplex from a connected component in $\DVW(F)$ such that one vertex comes from $\DW(F)$ and the other $(n+1)$-vertices come from $\DV(F)$ or vise versa.
Conversely, we can identify such $(n+1)$-subsimplex in $\DVW$ as $n$-compatible weak reducing pairs.
\end{lemma}

\begin{definition}[Definition 3.3 of \cite{Bachman2}]\label{definition-critical}
	Let $F$ be a surface in some $3$-manifold which is compressible to both sides.
	The surface $F$ is \textit{critical} if the set of all compressing disks for $F$ can be partitioned into subsets $C_0$ and $C_1$ such that the following hold.
	\begin{enumerate}
		\item For each $i=0,1$ there is at least one weak reducing pair $(D_i,E_i)$, where $D_i$, $E_i\in C_i$.
		\item If $D\in C_i$ and $E\in C_j$, then $(D,E)$ is not a weak reducing pair for $i\neq j$.
	\end{enumerate}
\end{definition}

Bachman also introduced the concept of \emph{topologically minimal surfaces} in \cite{Bachman2010} as follows.

\begin{definition}[Definition 1.1 of \cite{Bachman2010}]
	The \emph{homotopy index} of a complex $\Gamma$ is defined to be 0 if $\Gamma=\emptyset$, and the smallest $n$ such that $\pi_{n-1}(\Gamma)$ is non-trivial, otherwise.
	We say a surface $F$ is \emph{topologically minimal} if its disk complex $\D(F)$ is either empty or non-contractible.
	When $F$ is topologically minimal, we say its \emph{topological index} is the homotopy index of $\D(F)$.
\end{definition}

The next lemma gives a connection between critical surfaces and their topological indices.

\begin{proposition}[Theorem 2.5 of \cite{Bachman2010}]
A surface in a compact, orientable $3$-manifold has topological index 2 if and only if it is critical.
\end{proposition}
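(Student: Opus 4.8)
The plan is to compute the homotopy type of $\D(F)$ from the way the two families of compressing disks meet, and then to match the outcome against the partition of Definition~\ref{definition-critical}. By definition, $F$ has topological index $2$ exactly when $\D(F)$ is connected (so that $\pi_0$ is trivial) but $\pi_1(\D(F))$ is non-trivial. The organising observation is that a weak reducing pair is precisely an edge of $\D(F)$ running from a vertex of $\DV(F)$ to a vertex of $\DW(F)$, so $\DVW(F)$ is the interface along which the two contractible pieces $\DV(F)$ and $\DW(F)$ are glued; my whole strategy is to show that the homotopy type of $\D(F)$ is governed by the number of connected components of this interface.

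First I would fix a van Kampen decomposition $\D(F)=A\cup B$ in the barycentric subdivision, where $A$ (resp. $B$) is the full subcomplex on the barycentres of those simplices containing at least one vertex of $\DV(F)$ (resp. $\DW(F)$). A Quillen-style poset retraction, $\sigma\mapsto\sigma\cap V$, shows that $A$ deformation retracts onto $\DV(F)$, which is contractible by McCullough's theorem; likewise $B\simeq\DW(F)$ is contractible. The intersection $A\cap B$ is the full subcomplex on the barycentres of the mixed simplices, and I would argue that it carries the same component structure as $\DVW(F)$. Van Kampen then yields exactly what is needed: $\D(F)$ is connected iff $\DVW(F)\neq\emptyset$, and, since $A$ and $B$ are simply connected, $\pi_1(\D(F))\neq 0$ iff $A\cap B$ is disconnected. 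As a sanity check this recovers the endpoints of the theory, since $\DVW(F)=\emptyset$ gives the disconnected $\D(F)$ of a strongly irreducible, index~$1$ surface. Hence $F$ has topological index $2$ if and only if $\DVW(F)$ has at least two components.

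It then remains to prove that $\DVW(F)$ has at least two components if and only if $F$ is critical, which is the combinatorial core. For the forward direction, the key point is that every edge of $\DVW(F)$ is monochromatic for the partition $C_0\sqcup C_1$: a $V$--$W$ edge is a weak reducing pair, hence monochromatic by condition~$(2)$ of Definition~\ref{definition-critical}, while a $V$--$V$ or $W$--$W$ edge of $\DVW(F)$ is a face of a mixed simplex and so is disjoint from a disk on the opposite side, which forms a weak reducing pair with each of its endpoints and thereby forces them into the same $C_i$. Thus the colouring is constant on each component, so the pairs $(D_0,E_0)\in C_0$ and $(D_1,E_1)\in C_1$ supplied by condition~$(1)$ must lie in different components. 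Conversely, given two or more components I would two-colour them using both colours, give every disk lying in some weak reducing pair the colour of its component (well defined since weak reducing pairs sharing a disk lie in one component), place the remaining disks in $C_0$, and verify conditions~$(1)$ and~$(2)$; condition~$(2)$ holds because the two disks of any weak reducing pair, being joined by an edge, lie in one component and hence receive the same colour.

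The step I expect to fight hardest for is the identification in the second paragraph of the component structure of the interface $A\cap B$ with that of $\DVW(F)$. The subtlety is that $A\cap B$ is built from the \emph{mixed} simplices only, whereas the subcomplex $\DVW(F)$ also contains the non-mixed faces of those simplices; one must check that deleting or reinstating these faces neither merges nor separates components (and, for the cleanest statement about $\pi_1$, that the relevant inclusion is a homotopy equivalence). The honest resolution is that every mixed simplex, and every non-mixed face of one, is connected through weak reducing pairs to the same place, so that $\pi_0$ is already computed by the graph whose edges are the weak reducing pairs; once this is pinned down, the van Kampen bookkeeping and the two-colouring are routine, and the equivalence with criticality as formulated in Definition~\ref{definition-critical} follows.
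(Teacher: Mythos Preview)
The paper does not supply its own proof of this proposition: it is quoted verbatim as Theorem~2.5 of \cite{Bachman2010} and used as a black box, so there is nothing in the paper to compare your argument against. Your write-up is therefore an attempted reconstruction of Bachman's proof rather than a paraphrase of anything in the present paper.

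That said, your outline is along the right lines and is close in spirit to Bachman's argument: the essential point is indeed that $\DV(F)$ and $\DW(F)$ are contractible and that the homotopy type of $\D(F)$ is controlled by how many pieces the ``mixed'' part has, and your two-colouring of weak reducing pairs by components is exactly the mechanism that produces the partition in Definition~\ref{definition-critical}. The place where your sketch is genuinely incomplete is the one you flag yourself: the identification of $\pi_0(A\cap B)$ with $\pi_0(\DVW(F))$. With your barycentric setup, $A\cap B$ is the order complex of the poset of \emph{mixed} simplices, whereas $\DVW(F)$ also contains their non-mixed faces; two weak reducing pairs $\{D,E\}$ and $\{D,E'\}$ with $E\cap E'\neq\emptyset$ are adjacent in $\DVW(F)$ through the vertex $D$, but in $A\cap B$ they are distinct vertices with no obvious chain of mixed face-relations joining them. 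Thus the surjection $\pi_0(A\cap B)\to\pi_0(\DVW(F))$ is clear, but injectivity needs a real argument. This matters only in the direction ``index~$2\Rightarrow$ critical'', since for ``critical $\Rightarrow$ index~$2$'' disconnectedness of $\DVW(F)$ already forces disconnectedness of $A\cap B$. One clean fix is to build the partition $C_0\sqcup C_1$ directly from an essential loop in $\D(F)$ (as Bachman does), rather than via $\pi_0(\DVW)$; another is to prove that the inclusion $A\cap B\hookrightarrow \operatorname{sd}(\DVW(F))$ is a homotopy equivalence by exhibiting a poset retraction.
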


Therefore, if a surface $F$ is critical, then we can triangulate a $1$-sphere $S$ so that there exists a simplicial map $i:S\to \D(F)$ and its image is non-trivial in $\D(F)$.

\begin{definition}
	Suppose that $(D_1,E_1)$ and $(D_2, E_2)$ are $1$-compatible weak reducing pairs and $f$ be the $2$-subsimplex in $\DVW$ determined by these three compressing disks.
	Let us label $f$ ``D'' (``E'' resp.) if $E_1=E_2$ ($D_1=D_2$ resp.) and call $f$ a \textit{D-face} (an \textit{E-face} resp.)
	From now on, we will identify $1$-compatible weak reducing pairs as a $2$-subsimplex in $\DVW(F)$ determined by three compressing disks such that one of them comes from one side of $F$ and the others come from the other side of $F$.
	Moreover, we will say that two weak reducing pairs $(D, E)$ and $(D', E')$ are \emph{weakly compatible} if both weak reducing pairs are the same or there is a finite sequence of $2$-subsimplices $\Delta_0$, $\cdots$, $\Delta_m$ in $\DVW$ such that 
	\begin{enumerate}
		\item $(D,E)\subset \Delta_0$ and $(D', E')\subset \Delta_m$,
		\item  each $\Delta_i$ is a D- or an E-face, and
		\item  $\Delta_i$ and $\Delta_{i+1}$ share a weak reducing pair for $0\leq i \leq m-1$.
	\end{enumerate}
\end{definition}

In section 8 of \cite{Bachman2}, Bachman considered a sequence of compressing disks 
$$D=D_0 - E=E_0 - D_1 - E_1 - \cdots - D'=D_m - E'=E_m,$$
where (a) $D_i = D_{i+1}$ or $D_i\cap D_{i+1}=\emptyset$ and (b) $E_i = E_{i+1}$ or $E_i\cap E_{i+1}=\emptyset$ for each $0\leq i \leq m-1$ and both $(D_i, E_i)$ for $0\leq i\leq m$ and $(D_{i+1},E_i)$ for $0\leq i \leq m-1$ are weak reducing pairs.
He defined the distance between two weak reducing pairs $(D,E)$ and $(D',E')$ using the minimal length of this sequence.
If there is no such sequence between them, the distance is defined as $\infty$.

In Lemma 8.4 of \cite{Bachman2}, the distance of two different weak reducing pairs is finite when $(D,E)$ shares a compressing disk with $(D',E')$.
In particular, we can assume that $D_i\neq D_{i+1}$ or $E_i\neq E_{i+1}$ for some $0\leq i \leq m-1$ in this case, i.e. we can find the $1$-compatible weak reducing pairs $(D_i, E_i)$ and $(D_{i+1}, E_i)$, or $(D_{i+1}, E_i)$ and  $(D_{i+1}, E_{i+1})$ for some $i$ respectively.

Hence we get the next lemma immediately.

\begin{lemma}[Lemma 8.4 of \cite{Bachman2}]\label{lemma-Bachman}
	Suppose $F$ is an embedded surface in a $3$-manifold.
	 If there are different weak reducing pairs such that one shares a compressing disk with the other, then we can choose $1$-compatible weak reducing pairs for $F$.
\end{lemma}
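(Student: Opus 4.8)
The plan is to exploit the alternating ``distance'' sequence between weak reducing pairs recalled just above, and to extract a $1$-compatible pair from a shortest such sequence. By the symmetry interchanging $V$ and $W$, I may assume the shared compressing disk lies in $V$; that is, the two given pairs are $(D,E)$ and $(D,E')$ with $E$ not isotopic to $E'$ in $W$ (if they were isotopic the two pairs would coincide). Both $E$ and $E'$ are disjoint from $D$, so the only obstruction to $(D,E)$ and $(D,E')$ being $1$-compatible outright is that $E$ and $E'$ may intersect.

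First I would establish that the distance between $(D,E)$ and $(D,E')$ is finite, which is the substance of the finiteness asserted in \cite{Bachman2} and which I would recover by induction on $|E\cap E'|$ after minimizing this intersection by isotopy. If $E\cap E'=\emptyset$ there is nothing to prove. Otherwise an innermost circle or outermost arc of $E\cap E'$ on $E'$ cuts off a subdisk $\delta\subset E'$ whose interior misses $E$; surgering $E$ along $\delta$ produces a compressing disk $E''$ in $W$ that can be isotoped off $E$ and that still lies disjoint from $D$ (since both $E$ and $\delta\subset E'$ are disjoint from $D$), while $|E''\cap E'|<|E\cap E'|$. Thus $(D,E)$, $(D,E'')$, and $(D,E')$ fit into a single alternating sequence, and finiteness follows by induction.

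Finally I would pass to a \emph{shortest} alternating sequence $D=D_0-E=E_0-D_1-E_1-\cdots-D'=D_m-E'=E_m$ realizing the (finite) distance. The consecutive weak reducing pairs it determines are $(D_0,E_0),(D_1,E_0),(D_1,E_1),(D_2,E_1),\ldots$, and any two adjacent ones share a disk and differ only in the opposite disk, which are either equal or disjoint. Because the two endpoints are distinct, at least one adjacent pair genuinely differs, say $(D_i,E_i)$ and $(D_{i+1},E_i)$ with $D_i\cap D_{i+1}=\emptyset$; minimality of the sequence forces $D_i$ not to be isotopic to $D_{i+1}$ (otherwise the step could be deleted). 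Then $(D_i,E_i)$ and $(D_{i+1},E_i)$ satisfy condition~(2) of the definition of $n$-compatibility, so they are the desired $1$-compatible weak reducing pairs.

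I expect the main obstacle to be the finiteness step rather than the extraction: one must be careful that the surgered disk $E''$ can be chosen to be an \emph{essential} compressing disk (a surgery may split $E$ into an essential and an inessential piece, and one must retain the essential one) and that the intersection count strictly drops, so that the induction is well founded. The extraction from a shortest sequence is then routine, with minimality doing the work of upgrading ``distinct'' to ``non-isotopic.''
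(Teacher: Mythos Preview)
Your proposal is correct and follows essentially the same route as the paper. The paper does not give an independent proof of this lemma: it simply invokes Bachman's distance between weak reducing pairs, cites Lemma~8.4 of \cite{Bachman2} for finiteness when the two pairs share a disk, and then observes (exactly as you do in your final paragraph) that in a realizing sequence some consecutive step must genuinely change one of the disks, yielding the desired $1$-compatible pair. Your argument is strictly more detailed than the paper's, since you sketch the innermost/outermost surgery induction that establishes finiteness rather than just citing it, and you make explicit the role of minimality in upgrading ``different'' to ``non-isotopic,'' which the paper leaves implicit.
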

%


The next proposition gives a connection between a critical surface and the existence of two non-weakly compatible weak reducing pairs.
	
\begin{proposition}[Proposition 2.3 of \cite{JungsooKim1}]\label{prop-Bachman}
	If there are two weak reducing pairs $(D, E)$ and $(D', E')$ such that they are not weakly compatible,  then the surface is critical.
\end{proposition}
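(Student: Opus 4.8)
The plan is to use \emph{weak compatibility} as the organizing relation and to build from it the partition of compressing disks demanded by Definition~\ref{definition-critical}. The first point to record is that weak compatibility is an equivalence relation on the set of weak reducing pairs of $F$: reflexivity and symmetry are immediate from the definition (reverse the sequence $\Delta_0,\dots,\Delta_m$), and transitivity follows by concatenating two chains of D- and E-faces that share a common pair at their junction. Since a weak reducing pair $(D,E)$ already witnesses that $F$ compresses to both sides, $F$ is a legitimate candidate for being critical. I would then take
\[
C_0=\{\,X : X \text{ lies in a weak reducing pair weakly compatible with }(D,E)\,\}
\]
together with every compressing disk lying in \emph{no} weak reducing pair, and let $C_1$ be the complement of $C_0$ among all compressing disks for $F$.

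The technical core is the claim that any two distinct weak reducing pairs sharing a compressing disk are automatically weakly compatible. This is precisely what the discussion preceding Lemma~\ref{lemma-Bachman} delivers: Bachman's finite-distance sequence $D{=}D_0 - E{=}E_0 - D_1 - \cdots - D'{=}D_m - E'{=}E_m$ between two pairs sharing a disk can be reinterpreted as a succession of $1$-compatible weak reducing pairs, i.e. of D- and E-faces, which is exactly a weak-compatibility sequence. Granting this claim, verifying Definition~\ref{definition-critical} is bookkeeping. For condition (1), $(D,E)\subset C_0$ holds trivially, while $(D',E')\subset C_1$ because if $D'$ or $E'$ belonged to a pair weakly compatible with $(D,E)$, then that pair shares a disk with $(D',E')$, so by the claim and transitivity $(D',E')$ would be weakly compatible with $(D,E)$, contrary to hypothesis. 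For condition (2), given any weak reducing pair $(A,B)$ with, say, $A\in C_0$, the disk $A$ cannot lie in the ``no-pair'' part of $C_0$, so $A$ lies in a pair weakly compatible with $(D,E)$; since $(A,B)$ shares $A$ with that pair, the claim forces $(A,B)$ itself to be weakly compatible with $(D,E)$, whence $B\in C_0$ as well. The symmetric argument and its contrapositive show that every weak reducing pair is monochromatic, which is exactly the prohibition in condition (2).

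The step I expect to be the real obstacle is the reinterpretation underlying the claim: one must check that along Bachman's distance sequence each genuine transition — where the $V$-disk or the $W$-disk actually changes — produces a pair of disks that is disjoint and non-isotopic, hence a bona fide $1$-compatible pair (a D- or E-face) rather than a degenerate one, while the transitions in which a disk is merely replaced by an isotopic copy are discarded. Once this translation from finite distance into a chain of D- and E-faces is secured, the equivalence-relation structure together with the two verifications above exhibits the partition $\{C_0,C_1\}$, and Definition~\ref{definition-critical} then certifies that $F$ is critical.
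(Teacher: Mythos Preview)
The paper does not supply its own proof of this proposition; it merely cites it as Proposition~2.3 of \cite{JungsooKim1} and remarks that it is equivalent to Lemma~8.5 of \cite{Bachman2}. Your argument is a correct direct verification and is essentially the argument behind those references: Bachman's finite-distance relation and the paper's notion of weak compatibility coincide (a Bachman sequence is precisely a chain of D- and E-faces once isotopic repetitions are collapsed), so the equivalence classes partition the weak reducing pairs, and throwing the disks of one class into $C_0$ and the rest into $C_1$ yields the partition required by Definition~\ref{definition-critical}. The only point worth tightening is your ``claim'': you need not just that \emph{some} $1$-compatible pair appears in the Bachman sequence (which is what Lemma~\ref{lemma-Bachman} as stated here records), but that \emph{every} nontrivial step of the sequence is a $1$-compatible pair, so that the full chain realizes weak compatibility. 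You do note this, and it is indeed automatic once disks are taken up to isotopy, since consecutive disks in the Bachman sequence are by definition disjoint.
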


Note that this proposition is equivalent to Lemma 8.5 of \cite{Bachman2}.

Let $\bar{D}\subset V$, $\bar{E}\subset W$ be compressing disks, where $\partial \bar{D}$ and $\partial\bar{E}$ intersect transversely in a single point.
Such a pair of disks is called a \textit{canceling pair} of disks for the splitting.
If there is a canceling pair, then we call the splitting \textit{stabilized}.

In the next part of this section, we introduce another properties of $m$-compatible weak reducing pairs when $M$ is irreducible and the Heegaard splitting $H$ is unstabilized and of genus three.

Since the only irreducible $3$-manifold which has a boundary component isomorphic to a $2$-sphere is $B^3$ and a Heegaard splitting of genus three of $B^3$ is stabilized by using Waldhausen's Theorem (see \cite{Waldhausen}), we do not consider $B^3$ and assume that a boundary component of $M$ is not homeomorphic to a $2$-sphere.

The following lemma gives the maximal number $n$ for choosing $n$-compatible weak reducing pairs if the manifold is irreducible and the splitting is unstabilized and of genus three, and describes how the boundary curves for the disks of $1$-compatible weak reducing pairs behave.

\begin{lemma}[Lemma 3.2 and Corollary 3.3 of \cite{JungsooKim1}]\label{corollary-pants}
	Suppose that $M$ is an irreducible $3$-manifold and $H=(V,W;F)$ is a genus three, unstabilized Heegaard splitting of $M$.
	Then we can choose at most $1$-compatible weak reducing pairs.
	Moreover, if $(D_0,E)$ and $(D_1, E)$ are $1$-compatible weak reducing pairs, then $\partial D_0$ is separating and $\partial D_1$ is non-separating in $F$, and $\partial D_0 \cup \partial D_1$ cuts off a pair of pants from $F$.
	In addition, if $\tilde{F}\subset F$ is obtained from the once-punctured genus two component of $F-\partial D_0$ by removing $\partial E$, then $\tilde{F}$ does not have compressing disks in $V$ other than those parallel to $D_0$.
\end{lemma}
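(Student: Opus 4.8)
The plan is to read off everything from the configuration of the three pairwise‑disjoint essential curves on $F$ that a system of compatible pairs produces, isolating one genuine piece of $3$‑manifold geometry (a pair‑of‑pants statement for a single D‑face) from which the remaining assertions follow by a parity count. First I would apply Lemma~\ref{lemma2-JungsooKim2012-A}: after choosing disjoint representatives, a pair of $1$‑compatible weak reducing pairs sharing $E$ is a D‑face $(D_0,E),(D_1,E)$, where $D_0,D_1\subset V$ and $E\subset W$ are mutually disjoint and $D_0$ is not isotopic to $D_1$ in $V$ (dually for E‑faces, so by the $V\leftrightarrow W$ symmetry it suffices to treat D‑faces). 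Thus the whole lemma comes down to understanding two disjoint, non‑isotopic curves $\partial D_0,\partial D_1$ on the genus‑three surface $F$ that admit a common disjoint $W$‑compressing disk $E$.

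The crux is the following pair‑of‑pants statement for one D‑face: exactly one of $\partial D_0,\partial D_1$ is separating, and, writing $\partial D_0$ for the separating curve, $\partial D_0$ splits $F$ into a once‑punctured torus $F_1$ and a once‑punctured genus‑two surface $F_2$ (from which $\tilde F$ is obtained by deleting $\partial E$), with $\partial D_1\subset F_1$, so that cutting $F_1$ along the non‑separating curve $\partial D_1$ produces the pair of pants. To establish this I would enumerate the topological types of a disjoint, non‑isotopic pair of essential curves on a genus‑three surface — both non‑separating with non‑separating union, both non‑separating with separating union, one separating and one non‑separating, and both separating — and then eliminate every type except the required one using Euler‑characteristic bookkeeping together with two geometric constraints: that each $\partial D_i$ bounds a disk in the \emph{same} compression body $V$, and that there is an essential curve $\partial E$ bounding in $W$ disjoint from both. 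The main obstacle is ruling out the two configurations in which both $\partial D_0$ and $\partial D_1$ are non‑separating; there the surface picture alone does not suffice, and I expect to have to pass to the $3$‑manifold and show that such a configuration either forces $D_0$ and $D_1$ to be isotopic in $V$ (contradicting the D‑face hypothesis) or yields a canceling pair, contradicting that $H$ is unstabilized. This is precisely where irreducibility of $M$ and the genus‑three hypothesis enter, presumably through a Casson--Gordon/untelescoping analysis of the weak reduction.

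Granting this statement, the rest is formal. For ``at most $1$‑compatible'', suppose $2$‑compatible pairs existed; by Lemma~\ref{lemma2-JungsooKim2012-A} they may be written $(D_0,E),(D_1,E),(D_2,E)$ with $D_0,D_1,D_2$ pairwise disjoint and pairwise non‑isotopic in $V$. Applying the crux to each of the three D‑faces, exactly one curve of each pair $\{\partial D_i,\partial D_j\}$ is separating. Setting $s_i=1$ when $\partial D_i$ is separating and $s_i=0$ otherwise, this reads $s_i+s_j=1$ for all $i\neq j$; combining the relations forces $s_1=s_2$ and hence $2s_1=1$, impossible over the integers, so no $2$‑compatible pairs exist. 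Finally, for the assertion about $\tilde F$, suppose $D'\subset V$ were a compressing disk with $\partial D'\subset\tilde F$ not isotopic to $D_0$. Since $\partial D'\subset\tilde F\subset F_2$, it is disjoint from $\partial E$ and from $\partial D_0$, and also from $\partial D_1\subset F_1$; thus $(D',E)$ is a weak reducing pair, and $D_0,D_1,D'$ are pairwise non‑isotopic in $V$ (for $D'$ versus $D_1$: two disjoint isotopic curves cobound an annulus, which could not contain the essential separating curve $\partial D_0$, yet $\partial D_1$ and $\partial D'$ lie on opposite sides of $\partial D_0$). Hence $(D_0,E),(D_1,E),(D',E)$ would be $2$‑compatible, contradicting the count just proved. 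Therefore $\tilde F$ carries no $V$‑compressing disk other than those parallel to $D_0$, which completes the proof.
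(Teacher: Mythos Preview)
The paper does not supply a proof of this lemma; it is imported verbatim from \cite{JungsooKim1} (Lemma~3.2 and Corollary~3.3 there), so there is no in-paper argument to compare against.

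On the substance of your proposal: there is a genuine gap at exactly the step you yourself flag as the ``crux''. You correctly isolate the core assertion --- that in a D-face $(D_0,E),(D_1,E)$ exactly one of $\partial D_0,\partial D_1$ is separating in $F$, with the non-separating curve sitting on the once-punctured-torus side --- but you do not prove it. Your own language (``I expect to have to pass to the $3$-manifold'', ``presumably through a Casson--Gordon/untelescoping analysis'') concedes this. Eliminating the ``both non-separating'' and ``both separating'' configurations is precisely where the hypotheses that $M$ is irreducible and $H$ is unstabilized do real work, and a surface-level curve enumeration will not suffice; one has to argue, for instance, that in the forbidden configurations $\partial E$ is forced into a subsurface where it becomes parallel to some $\partial D_i$, and then use irreducibility to produce a reducing sphere or a canceling pair. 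Until that argument is written down (or cited), the proof is incomplete.

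Granting the crux, your two reductions are correct and rather clean. The parity count for ``at most $1$-compatible'' --- that $s_i+s_j=1$ for all $i\neq j$ admits no $\{0,1\}$-valued solution in three variables --- is a nice way to package the contradiction. The derivation of the $\tilde F$ statement from the $1$-compatible bound is also sound; one small point worth making explicit is that in a compression body two compressing disks are isotopic if and only if their boundaries are isotopic in $\partial_+$, so your observation that $\partial D'$ and $\partial D_1$ lie on opposite sides of the separating curve $\partial D_0$ really does give $D'\not\simeq D_1$ in $V$, as you need.
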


If there are $1$-compatible weak reducing pairs, then we can imagine the situations as in Figure \ref{fig-1-simplices}.
(The left one is when $\partial E$ is non-separating and the right one is when $\partial E$ is separating in $F$.)

\begin{figure}
	\includegraphics[width=6cm]{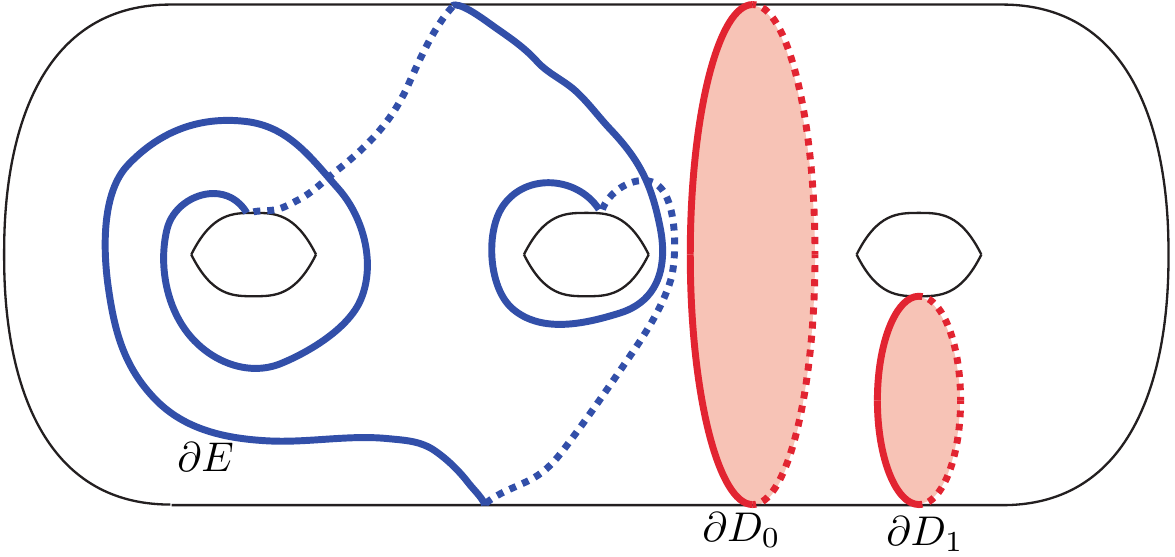}\quad\includegraphics[width=6cm]{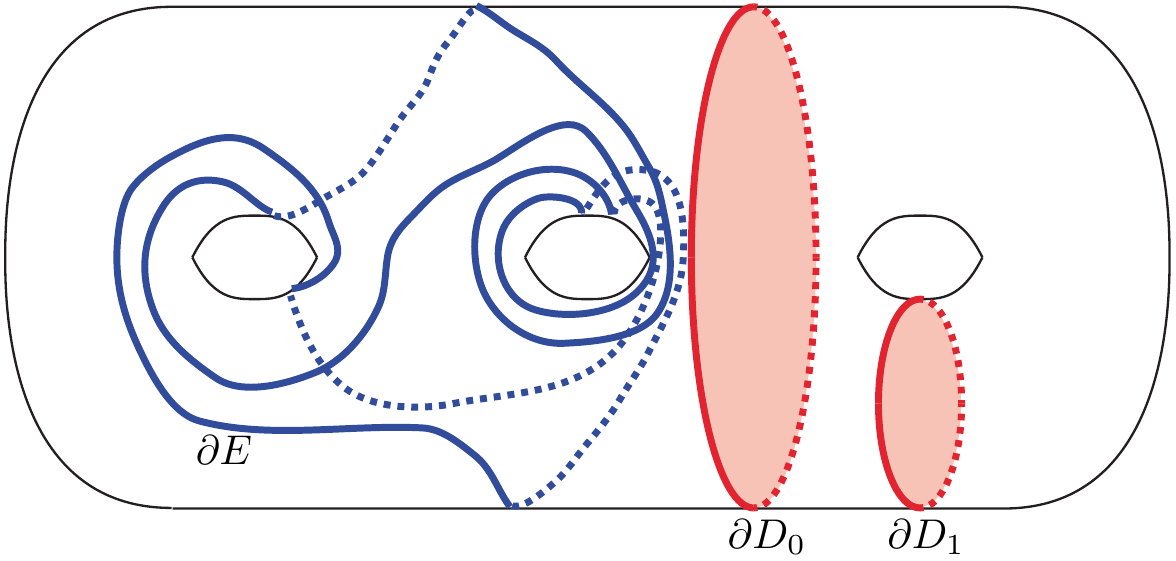}
	\caption{Two examples of $1$-compatible weak reducing pairs \label{fig-1-simplices}}
\end{figure}

The next proposition is the immediate corollary of Lemma \ref{corollary-pants}.

\begin{proposition}
	Suppose that $M$ is an irreducible $3$-manifold and $H=(V,W;F)$ is a genus three, unstabilized Heegaard splitting of $M$.
	If $F$ is topologically minimal, then the topological index of $F$ is at most four.
\end{proposition}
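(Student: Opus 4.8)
The plan is to reduce the computation of the homotopy index of $\D(F)$ to that of a complex of dimension at most two, by exploiting the contractibility of $\DV(F)$ and $\DW(F)$, and then to read off the dimension bound directly from Lemma \ref{corollary-pants}. Since by definition the topological index of $F$ is the homotopy index of $\D(F)$, and since $F$ is topologically minimal (so $\D(F)$ is non-empty and non-contractible), it suffices to show that $\D(F)$ fails to be $3$-connected.

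First I would introduce the simplicial ``level'' map $p\colon \D(F)\to[0,1]$ sending every vertex of $\DV(F)$ to $0$, every vertex of $\DW(F)$ to $1$, and extending affinely; a simplex is mixed exactly when its image is all of $[0,1]$. Then $p^{-1}([0,\tfrac12])$ deformation retracts onto $\DV(F)$ and $p^{-1}([\tfrac12,1])$ onto $\DW(F)$, and the two pieces are glued along the mid-level set $C=p^{-1}(\tfrac12)$. Because $\DV(F)$ and $\DW(F)$ are contractible (McCullough, as recalled above), this exhibits $\D(F)$ as a homotopy pushout with contractible legs, so I expect $\D(F)\simeq\Sigma C$, the unreduced suspension of $C$. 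As a sanity check this matches the known extremes: when $H$ is strongly irreducible there are no mixed simplices, $C=\emptyset$, and $\D(F)\simeq\Sigma\emptyset=S^0$ has index $1$; when $H$ is critical, $C$ is disconnected and $\Sigma C$ has index $2$, in agreement with Proposition \ref{prop-Bachman}.

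Next I would bound $\dim C$. A mixed simplex of $\D(F)$ is a set of $p$ pairwise disjoint, pairwise non-isotopic disks of $V$ together with $q$ such disks of $W$; its mid-level slice is the product of its bottom face (the $V$-disks) and its top face (the $W$-disks), and hence has dimension $(p-1)+(q-1)=p+q-2$. If $p\ge 3$, then three of the $V$-disks are pairwise disjoint and all disjoint from a common $W$-disk of the simplex, so they form three weak reducing pairs sharing that $W$-disk, i.e. $2$-compatible weak reducing pairs, contradicting Lemma \ref{corollary-pants}. Thus $p\le 2$, and symmetrically $q\le 2$, so every cell of $C$ has dimension at most $2$ and $\dim C\le 2$. (Note that the $2+2$ configurations are permitted, and are precisely what can push the index up to $4$.)

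Finally, $\D(F)\simeq\Sigma C$ has dimension at most $3$ and is non-contractible, hence cannot be $3$-connected; therefore its homotopy index is at most $\dim+1=4$, which is the topological index of $F$. The main obstacle is the second paragraph: making the equivalence $\D(F)\simeq\Sigma C$ precise — identifying $C$ up to homotopy and verifying that the two half-complexes genuinely retract onto the contractible pieces $\DV(F)$ and $\DW(F)$ with cofibration inclusions, so that the pushout is a homotopy pushout — is the only nontrivial point. Once it is in place, the dimension bound coming from Lemma \ref{corollary-pants} makes the conclusion immediate.
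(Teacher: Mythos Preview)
Your proposal is correct and takes a genuinely different route from the paper.

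The paper works directly with a representative sphere: assuming the index is $n$, it chooses a simplicial map $i\colon S^{n-1}\to\D(F)$ that is not null-homotopic, uses the contractibility of $\DV(F)$ and $\DW(F)$ to force some top simplex of $S^{n-1}$ to land on a mixed $(n-1)$-simplex $\Delta$ of $\D(F)$, and then reads off from Lemma~\ref{corollary-pants} that $\Delta$ has at most two $V$-vertices and two $W$-vertices, so $n\le 4$. No pushouts or suspensions appear. Your approach instead establishes the global equivalence $\D(F)\simeq\Sigma C$ via the level map and bounds $\dim C\le 2$ by the same lemma. What you gain is a structural description of $\D(F)$ and slightly tighter logic: the paper's passage from ``some top simplex of $S^{n-1}$ has mixed image'' to ``there is a mixed $(n-1)$-simplex in $\D(F)$'' tacitly assumes $i$ is non-degenerate on that cell, whereas your argument never touches a particular sphere map and so sidesteps this. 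What the paper gains is brevity and freedom from the cofibration/homotopy-pushout machinery; its two-line counting argument on $\Delta$ is more elementary to state. The obstacle you flag---that $C$ is collared in each half so the decomposition really is a homotopy pushout---is standard and poses no real difficulty.
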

 
\begin{proof}
	If $F$ has topological index $n$, then $\pi_{n-1}(\D(F))$ is non-trivial, and thus there is a continuous map $i:S \to \D(F)$ which is not homotopic to a point, where $S$ is an $(n-1)$-sphere.
	Triangulate $S$ so that the map $i$ is simplicial.
	Then we can say that $i(S)$ is a finite union of $(n-1)$-cells embedded in $\D(F)$.
	Since $\DV(F)$ and $\DW(F)$ are contractible but $i(S)$ is not contractible in $\D(F)$, there must be an $(n-1)$-cell $\Delta$ spanned by vertices in $\DV$ and $\DW$ among them.
	Let the vertices of $\Delta$ be $\{D_0, \cdots, D_k, E_{k+1}, \cdots E_{n-1}\}$, where $0\leq k \leq n-2.$
	If $k\geq 2$, then we can choose $2$-compatible weak reducing pairs $(D_0, E_{k+1})$, $(D_1, E_{k+1})$, and $(D_2, E_{k+1})$, which contradicts Lemma \ref{corollary-pants}.
	Therefore, we get $k\leq 1$. 
	Similarly, if $(n-1)-(k+1) = n-k-2\geq 2$, then we can choose $2$-compatible weak reducing pairs $(D_k, E_{k+1})$, $(D_k, E_{k+2})$, and $(D_k, E_{k+3})$, i.e. a contradiction.
	Hence, we get $n-k\leq 3$.
	This means that $\dim(\Delta)\leq 3$, and we can describe $\Delta$ as $\{D_0, D_1, E_2, E_3\}$ if $\Delta$ is of dimension three by Lemma \ref{corollary-pants}.
\end{proof}

\begin{lemma}[Lemma 3.4 of \cite{JungsooKim1}]\label{lemma8}
	Assume $M$, $H$, and $F$ as in Lemma \ref{corollary-pants}.
	If there are two adjacent D-faces $f_1$ and $f_2$ in $\DVW$ such that $f_1$ is determined by $D_0$, $D_1$, and $E$, and $f_2$ is determined by $D_1$, $D_2$, and $E$, then $\partial D_1$ is non-separating, and $\partial D_0$, $\partial D_2$ are separating in $F$.
\end{lemma}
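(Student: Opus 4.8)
The plan is to apply Lemma \ref{corollary-pants} to each of the two $1$-compatible weak reducing pairs underlying $f_1$ and $f_2$, and then to rule out the ``wrong'' configuration by a solid-torus argument. First I would invoke Lemma \ref{corollary-pants} for the $1$-compatible pair $(D_0,E),(D_1,E)$ and again for $(D_1,E),(D_2,E)$: in each pair exactly one of the two $V$-disks has separating boundary and the other has non-separating boundary in $F$. Thus there are only two possibilities, according to whether $\partial D_1$ is non-separating or separating. If $\partial D_1$ is non-separating, the conclusion is immediate, since the first application then forces $\partial D_0$ to be separating and the second forces $\partial D_2$ to be separating, which is exactly the assertion. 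Hence everything reduces to deriving a contradiction from the assumption that $\partial D_1$ is separating.

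So suppose $\partial D_1$ is separating; then $\partial D_0$ and $\partial D_2$ are both non-separating. Since $\partial D_1$ is an essential separating curve on the genus-three surface $F$, it cuts $F$ into a once-punctured torus $G_1$ and a once-punctured genus-two surface $G_2$. Because $D_0\cap D_1=\emptyset$ and $D_2\cap D_1=\emptyset$, each of $\partial D_0,\partial D_2$ lies entirely in $G_1$ or in $G_2$. I would next use the ``In addition'' clause of Lemma \ref{corollary-pants}, applied with $D_1$ playing the role of the separating disk: the surface $\tilde F$ obtained from $G_2$ by removing $\partial E$ carries no $V$-compressing disk except those parallel to $D_1$. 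Since $\partial D_i\cap\partial E=\emptyset$, if $\partial D_0$ (or $\partial D_2$) were contained in $G_2$ it would be a curve in $\tilde F$, and one checks it is essential there: the only ways it could fail are to be parallel to $\partial D_1$ (impossible, as $\partial D_i$ is non-separating while $\partial D_1$ is separating) or parallel to $\partial E$ (which would make $D_i\cup E$ a reducing sphere, contradicting that $H$ is unstabilized). Hence such a disk would be parallel to $D_1$, again contradicting separating versus non-separating. Therefore both $\partial D_0$ and $\partial D_2$ must lie in the once-punctured torus $G_1$.

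Finally I would cut the compression body $V$ along the separating disk $D_1$ and consider the piece $V_1$ containing $G_1$; its positive boundary is $G_1$ capped off by a copy of $D_1$, hence a closed torus, so $V_1$ is a genus-one compression body. As it contains the compressing disk $D_0$, it cannot be $T^2\times I$ and must be a solid torus, whose meridian disk is unique up to isotopy. Consequently $D_0$ and $D_2$, both compressing disks of $V_1$, are isotopic in $V$, so the two $2$-simplices $\{D_0,D_1,E\}$ and $\{D_1,D_2,E\}$ coincide, contradicting the hypothesis that $f_1$ and $f_2$ are two distinct adjacent D-faces. This contradiction rules out the separating case and completes the proof. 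I expect the main obstacle to be the middle step: verifying that a hypothetical $\partial D_0\subset G_2$ really produces a genuine (essential) compressing disk of $\tilde F$ so that Lemma \ref{corollary-pants} applies, which in turn forces one to exclude the degenerate configuration $\partial D_0\simeq\partial E$ by appealing to the unstabilizedness of $H$.
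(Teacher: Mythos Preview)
The paper does not give a proof of this lemma here; it is quoted as Lemma~3.4 of \cite{JungsooKim1} and imported without argument. So there is no in-paper proof to compare against, and I can only comment on the soundness of your proposal.

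Your argument is correct. A small simplification: the detour through the ``In addition'' clause of Lemma~\ref{corollary-pants} in your second paragraph is not needed. Once you assume $\partial D_1$ is separating, Lemma~\ref{corollary-pants} applied to the pair $(D_0,E),(D_1,E)$ already says $\partial D_0\cup\partial D_1$ cuts off a pair of pants from $F$; since $\partial D_1$ is separating and $\partial D_0$ is non-separating, that pair of pants must have one boundary component on $\partial D_1$ and two on $\partial D_0$, so it is precisely $G_1$ cut along $\partial D_0$, forcing $\partial D_0\subset G_1$. The identical reasoning with the pair $(D_1,E),(D_2,E)$ places $\partial D_2\subset G_1$. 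This sidesteps the essentiality verification you flagged as the main obstacle. Your final step---cutting $V$ along $D_1$, observing that the piece with torus positive boundary must be a solid torus because it contains the compressing disk $D_0$, and invoking uniqueness of the meridian to force $D_0\simeq D_2$---is exactly the mechanism the paper later isolates as Lemma~\ref{corollary-corollary-pants}(\ref{corollary-corollary-pants-2}), so your approach is fully in line with the surrounding material.
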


\begin{definition}
	Let us consider a $1$-dimensional graph as follows.
	\begin{enumerate}
		\item We assign a vertex to each D-face.
		\item If a D-face shares a weak reducing pair with another D-face, then we assign an edge between these two vertices in this graph.
	\end{enumerate}
	We call this graph ``the \emph{D-graph}''.
			If there is a maximal subset $\mathcal{D}$ of D-faces in $\DVW$ representing a connected component of the D-graph and the component is not an isolated vertex, then we call $\mathcal{D}$ ``a \emph{D-cluster}''.
	Similarly, we define ``the \emph{E-graph}'' and  ``an \textit{E-cluster}'' for E-faces.
	In a D-cluster (an E-cluster resp.), every weak reducing pair gives the common $E$-disk ($D$-disk resp.).
\end{definition}

We can immediately get the next lemma about a D- or an E-cluster from Lemma \ref{lemma8}.

\begin{lemma}[Lemma 3.6 of \cite{JungsooKim1}]\label{lemma-cluster}
	There is only one unique weak reducing pair in a D-cluster which can belong to two or more faces in the D-cluster.
\end{lemma}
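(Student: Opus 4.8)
The plan is to read off the combinatorial structure of a D-cluster $\mathcal{D}$ from the separating/non-separating dichotomy of Lemma \ref{corollary-pants} together with the adjacency constraint of Lemma \ref{lemma8}. First I would record the shape of the objects. Since every weak reducing pair in $\mathcal{D}$ carries the common $E$-disk $E$, each D-face is a $2$-simplex $\{D^s,D^n,E\}$, and by Lemma \ref{corollary-pants} exactly one of its two $V$-disks has separating boundary in $F$ while the other has non-separating boundary. Thus each D-face contains precisely one \emph{separating} weak reducing pair $(D^s,E)$ and precisely one \emph{non-separating} pair $(D^n,E)$.

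The key observation is that any weak reducing pair lying in two or more faces of $\mathcal{D}$ must be a non-separating one. Indeed, if $(D,E)$ belonged to two faces $f_1=\{D_0,D,E\}$ and $f_2=\{D,D_2,E\}$, then Lemma \ref{lemma8} would force $\partial D$ to be non-separating in $F$ (and $\partial D_0,\partial D_2$ to be separating). Consequently every separating pair $(D^s,E)$ belongs to exactly one face, so every edge of the D-graph --- equivalently, every shared weak reducing pair --- is a non-separating pair.

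Next I would run a rigidity/propagation argument along the connected D-graph. Fix any face $f_0=\{D^n_0,D^s_0,E\}$ of $\mathcal{D}$. Its only non-separating pair is $(D^n_0,E)$, and its separating pair $(D^s_0,E)$ lies in no other face; hence $f_0$ is adjacent only to faces containing $(D^n_0,E)$. Any such neighbor, containing the non-separating pair $(D^n_0,E)$, must have $D^n_0$ as its unique non-separating disk, so the same reasoning applies to it in turn. Since $\mathcal{D}$ is a connected component of the D-graph, induction on distance from $f_0$ shows that every face of $\mathcal{D}$ contains the single pair $(D^n_0,E)$. Because a D-cluster is by definition not an isolated vertex, at least two faces meet along an edge, so $(D^n_0,E)$ does belong to two or more faces; and by the previous paragraph no separating pair can. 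Hence $(D^n_0,E)$ is the unique weak reducing pair in $\mathcal{D}$ belonging to two or more faces.

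The step I expect to be the main obstacle is this propagation argument: making precise that once one face is pinned to the common non-separating pair $(D^n_0,E)$, each of its neighbors inherits the \emph{same} non-separating disk, so that the connected cluster cannot drift to a second non-separating pair. All the geometric content is already packaged in Lemmas \ref{corollary-pants} and \ref{lemma8}; the remaining work is the book-keeping that converts the separating/non-separating dichotomy into a rigidity statement for connected components of the D-graph.
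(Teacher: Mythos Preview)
Your argument is correct and is essentially the natural proof: Lemma \ref{corollary-pants} forces each D-face to carry exactly one non-separating $V$-disk, Lemma \ref{lemma8} forces any shared weak reducing pair to be the non-separating one, and connectivity of the D-graph then propagates a single non-separating pair $(D^n_0,E)$ through the entire cluster. Note that the present paper does not actually prove this lemma---it merely quotes it as Lemma 3.6 of \cite{JungsooKim1}---so there is no in-text proof to compare against; your write-up supplies exactly the argument one would expect, and the ``obstacle'' you flag (that a neighbor of $f_0$ inherits the same non-separating disk $D^n_0$, not a new one) is handled cleanly by Lemma \ref{lemma8}, which pins the non-separating slot to the shared disk.
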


\begin{definition} 
	By Lemma \ref{lemma-cluster}, there is a unique weak reducing pair in a D-cluster (an E-cluster resp.) adjacent to two or more faces in the cluster.
	We call it \textit{``the center of a D-cluster (an E-cluster resp.)''}.
	We call the other weak reducing pairs \textit{``hands of a D-cluster (an E-cluster resp.).''} See Figure \ref{fig-dcluster}.
	Note that if a D-face $f$ representing the $1$-compatible weak reducing pairs $(D_0, E)$ and $(D_1, E)$ is contained in a D-cluster, then one of $(D_0, E)$ and $(D_1, E)$ is the center and the other is a hand.
	Lemma \ref{lemma8} means that the D-disk from the center of a D-cluster is non-separating, and the D-disks from hands are all separating.
\end{definition}

\begin{figure}
	\includegraphics[width=4.5cm]{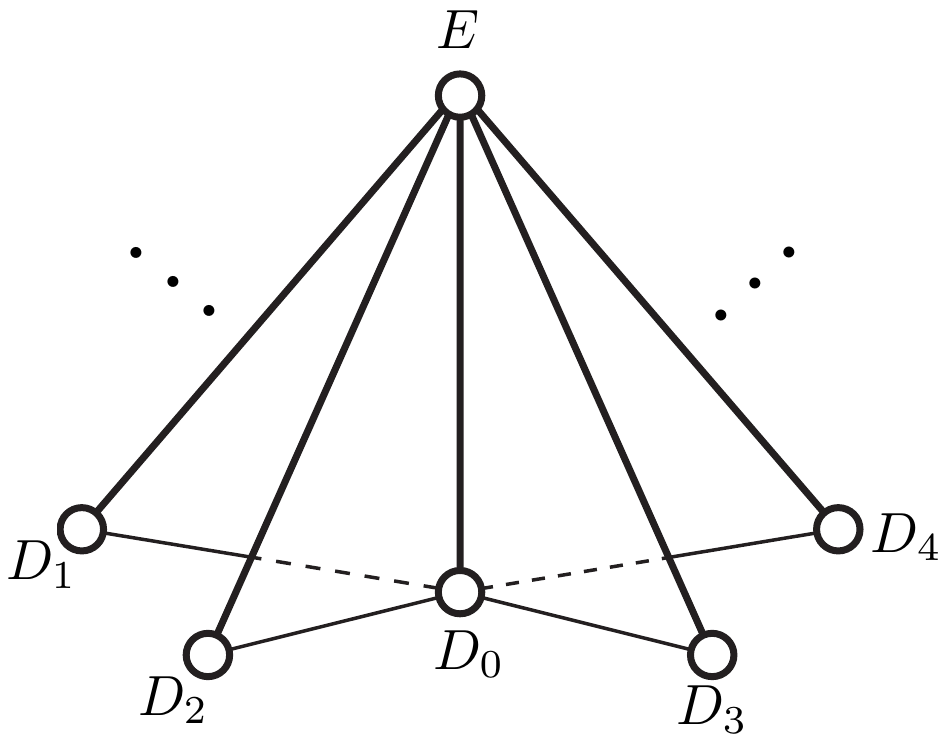}
	\caption{An example of a D-cluster in $\DVW$. $(D_0, E)$ is the center and the other weak reducing pairs are hands. \label{fig-dcluster}}
\end{figure}

\begin{lemma}[Lemma 3.8 of \cite{JungsooKim1}]\label{lemma-DorE} 
	Assume $M$, $H$, and $F$ as in Lemma \ref{corollary-pants}.
	Every D-face belongs to some $D$-cluster. 
	Moreover, every $D$-cluster has infinitely many hands.
\end{lemma}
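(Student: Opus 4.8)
The plan is to reduce both assertions to a single existence statement: given a D-face $f=\{D_0,D_1,E\}$, whose two weak reducing pairs are $(D_0,E)$ and $(D_1,E)$, I want to produce infinitely many separating compressing disks in $V$ that each span a D-face together with the shared disk $E$ and one distinguished disk of $f$. By Lemma~\ref{corollary-pants} exactly one of $\partial D_0,\partial D_1$ is non-separating in $F$; relabel so that $\partial D_1$ is the non-separating one. By Lemma~\ref{lemma8} the center of any D-cluster has a non-separating $D$-disk, and by Lemma~\ref{lemma-cluster} a cluster has a unique such center, so $(D_1,E)$ is the only candidate center for a cluster containing $f$. Hence it suffices to construct, for the fixed pair $(D_1,E)$, infinitely many separating disks $D_\beta\subset V$, pairwise disjoint from $D_1$ and from $E$ and non-isotopic, with $(D_\beta,E)$ a weak reducing pair. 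Indeed, each such $D_\beta$ gives a D-face $\{D_1,D_\beta,E\}$ sharing the pair $(D_1,E)$ with $f$, so $f$ is not an isolated vertex of the D-graph (settling the first statement), and all of these faces lie in the single cluster determined by the common center $(D_1,E)$, which therefore has infinitely many hands.

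For the construction I would cut $V$ along the non-separating disk $D_1$, obtaining a genus-two compression body $V'$ whose boundary $F'=F\setminus\partial D_1$ carries two scar disks $\delta^+,\delta^-$, the two copies of $D_1$. Since $\partial D_0$ and $\partial E$ are disjoint from $\partial D_1$, they survive in $F'$, and by the last clause of Lemma~\ref{corollary-pants} the curve $\partial E$ lies in the genus-two piece away from the scars. Now for any embedded arc $\beta\subset F'$ joining $\delta^+$ to $\delta^-$ and missing $\partial E$, let $D_\beta$ be the band sum of $D_1$ with a parallel copy along the band $N(\beta)$; this is an embedded disk in $V$ with boundary contained in $N(\partial D_1\cup\beta)$. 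I would then verify that $D_\beta$ is a hand: it is disjoint from $D_1$ because it lives in $V'$; its boundary is null-homologous (the two banded copies of $\partial D_1$ carry opposite orientations), hence separating in the closed surface $F$; and it is disjoint from $\partial E$ because $N(\partial D_1\cup\beta)$ misses $\partial E$. Consequently $D_1,D_\beta,E$ are pairwise disjoint with $D_\beta$ separating and $D_1$ non-separating, so $D_\beta\not\simeq D_1$, and $(D_1,E),(D_\beta,E)$ are $1$-compatible weak reducing pairs; their pair-of-pants structure is then guaranteed by Lemma~\ref{corollary-pants}.

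To obtain infinitely many, I would fix a base arc $\beta_0$ and an essential simple closed curve $\gamma\subset F'$ disjoint from $\partial E$ and from the scars but meeting $\beta_0$, and set $\beta_n=\tau_\gamma^{\,n}(\beta_0)$, where $\tau_\gamma$ is the Dehn twist along $\gamma$. Each $\beta_n$ is again an arc from $\delta^+$ to $\delta^-$ missing $\partial E$, so it yields a hand $D_{\beta_n}$. The main obstacle is precisely distinctness: I must show that infinitely many of the $D_{\beta_n}$ are pairwise non-isotopic in $V$, since a priori the band sum could absorb the twisting by an isotopy of $V$. I expect to handle this by an intersection-number growth argument, choosing $\gamma$ so that $\partial D_{\beta_n}$ meets a fixed reference curve dual to $\gamma$ in a number of points tending to infinity with $n$, which forces infinitely many isotopy classes of boundary curves and hence of disks. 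The delicate point is arranging $\gamma$ to be simultaneously essential in $F'$ and disjoint from $\partial E$ while still meeting $\beta_0$; this is possible because $\partial E$ is a single curve on the genus-two surface $F'$, so its complement retains essential curves crossing arcs between the scars. Once distinctness is established, all the $D_{\beta_n}$ are hands attached to the common center $(D_1,E)$, proving that every D-face lies in a D-cluster and that every D-cluster has infinitely many hands.
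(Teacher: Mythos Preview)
The paper does not give its own proof of this lemma; it is quoted from \cite{JungsooKim1} (as Lemma~3.8 there) and used as a black box. So there is no in-paper argument to compare your proposal against, and I will assess your argument on its own terms.

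Your reduction is correct and efficient: once you know (via Lemma~\ref{corollary-pants}) that one of $D_0,D_1$ has non-separating boundary, taking $(D_1,E)$ as the prospective center and producing infinitely many separating band-sum disks $D_\beta$ disjoint from $D_1$ and $E$ simultaneously handles both assertions. The verification that each $D_\beta$ is separating, disjoint from $D_1$ and from $E$, and yields a $1$-compatible pair with $(D_1,E)$ is fine; you should also note explicitly that $\partial D_\beta$ bounds a once-punctured torus (the neighborhood of $\partial D_1\cup\beta$) and hence is essential in $F$, so that $D_\beta$ really is a compressing disk.

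The gap you yourself flag is the only substantive one, and as written it is not yet closed. Two things need to be made precise. First, since $\partial D_{\beta_0}$ is separating in $F$, any $\gamma$ meets it an even number of times, so you should actually build $\gamma$ rather than assert it exists: take the essential arc in the pair of pants $P$ (bounded by $\partial D_0$ and the two copies of $\partial D_1$) joining $\partial D_0$ to itself, and close it up by an essential arc in the genus-one remainder of the component of $F\setminus(\partial D_1\cup\partial E)$ containing $\partial D_0$; this remainder has genus~$1$ in every case once reducibility is excluded, so such an arc exists and the resulting $\gamma$ satisfies $i(\gamma,\partial D_0)=2$. Second, the distinctness of the $\partial D_{\beta_n}=\tau_\gamma^{\,n}(\partial D_{\beta_0})$ in $F$ then follows immediately from the standard identity $i(\tau_\gamma^{\,n}c,c)=|n|\,i(\gamma,c)^2$ for $c\neq\gamma$; invoking this replaces your ``expected'' growth argument with a one-line computation. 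Since non-isotopic boundaries in $F$ force non-isotopic disks in $V$, this finishes the proof.
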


\begin{lemma}\label{corollary-corollary-pants}
	Assume $M$, $H$, and $F$ as in Lemma \ref{corollary-pants}.
	Suppose that $(D_0, E)$ and $(D_1, E)$ are $1$-compatible weak reducing pairs.
	Let $D_0$ be the separating one and $D_1$ be the non-separating one in $V$.
	Then we get the following results.
	\begin{enumerate}
		\item $D_0$ is a band sum of two parallel copies of $D_1$ in $V$.\label{corollary-corollary-pants-1}
		\item $D_1$ is uniquely determined by $D_0$, i.e. the D-disk of the center of the D-cluster is determined by the D-disk of a hand of this cluster.\label{corollary-corollary-pants-2}
	\end{enumerate}
\end{lemma}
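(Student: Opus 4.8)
The plan is to read off the whole configuration from Lemma~\ref{corollary-pants} and then translate two purely topological facts — one about a pair of pants, one about a solid torus — into statements about disks in $V$. First I would recall that, by Lemma~\ref{corollary-pants}, $\partial D_0$ is separating and $\partial D_1$ is non-separating in $F$, and $\partial D_0\cup\partial D_1$ cuts off a pair of pants $P$ from $F$. Since $F$ has genus three and one component of $F-\partial D_0$ is a once-punctured genus two surface, the other component $T$ is a once-punctured torus. A non-separating curve on the once-punctured genus two component would cut it into a genus one surface with three boundary circles, never a pair of pants, so $\partial D_1\subset T$ and $P$ is exactly $T$ cut along $\partial D_1$. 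Hence the three boundary circles of $P$ are $\partial D_0$ together with two parallel copies of $\partial D_1$.

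For part \eqref{corollary-corollary-pants-1}, I would take two parallel copies $D_1',D_1''$ of $D_1$ in $V$ whose boundaries are the two copies of $\partial D_1$ in $\partial P$, pick an embedded arc $\alpha\subset P$ joining them, and form the band sum $D_1'\,\natural\,D_1''$ along the band $\alpha\times I\subset P$. Banding two boundary circles of a pair of pants along an embedded arc produces a curve isotopic to the third boundary circle, so $\partial(D_1'\,\natural\,D_1'')$ is isotopic in $F$ to $\partial D_0$. Since $V$ is irreducible (compression bodies are irreducible), a compressing disk is determined up to isotopy by its boundary curve: two such disks may be isotoped to share their boundary and then, after removing interior intersections by innermost-disk moves, cobound a ball. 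Therefore $D_0$ is isotopic to $D_1'\,\natural\,D_1''$, which is the assertion of \eqref{corollary-corollary-pants-1}.

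For part \eqref{corollary-corollary-pants-2}, I would cut $V$ along $D_0$. As $D_0$ is separating this yields two compression bodies $V_1$ and $V_2$, where $V_2$ is the piece whose positive boundary is obtained from $T$ by capping $\partial D_0$ with a copy of $D_0$; thus $\partial_+V_2$ is a torus. Because $D_1\cap D_0=\emptyset$ and $\partial D_1\subset T\subset\partial_+V_2$, the disk $D_1$ lies in $V_2$. A compression body with torus positive boundary is either a solid torus or $T^2\times I$, and the latter has no compressing disk; since $V_2$ contains the compressing disk $D_1$, it must be a solid torus. A solid torus has a unique compressing disk up to isotopy, namely its meridian, so $D_1$ is isotopic to that meridian. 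As the isotopy class of $D_0$ determines $V_2$ up to isotopy, it determines its meridian, and hence $D_1$, proving \eqref{corollary-corollary-pants-2}.

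The step I expect to require the most care is the compression-body bookkeeping in part \eqref{corollary-corollary-pants-2}: verifying that cutting $V$ along the separating disk $D_0$ really produces a piece $V_2$ with torus positive boundary, and that the dichotomy ``solid torus or $T^2\times I$'' is exhaustive, so that the mere existence of the compressing disk $D_1$ forces the solid-torus case. In the handlebody case this is immediate, but one must check that the argument is not special to handlebodies, since the Main Theorem is stated for an arbitrary irreducible $M$. A secondary point to nail down is that the two copies of $\partial D_1$ in $\partial P$ are genuinely parallel in $F$ and bound parallel copies of $D_1$ in $V$, so that the band sum in part \eqref{corollary-corollary-pants-1} is well defined.
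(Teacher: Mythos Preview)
Your proof is correct and follows essentially the same approach as the paper's. Both arguments use the pair of pants $P$ from Lemma~\ref{corollary-pants} to realize $D_0$ as $D_1'\cup P\cup D_1''$ (equivalently, a band sum) via irreducibility of $V$, and both cut $V$ along $D_0$ to obtain a piece with once-punctured-torus positive boundary, then argue it is a solid torus so that $D_1$ is its unique meridian; the paper phrases the solid-torus conclusion as ``$V$ has no $S^2$-component in its minus boundary'' rather than your dichotomy ``solid torus or $T^2\times I$'', but these are the same observation.
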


\begin{proof}
	By Lemma \ref{corollary-pants}, $\partial D_0\cup \partial D_1$ cuts off a pair of pants $P$ from $F$.
	If we thicken $D_1$ in $V$ slightly, then we can get two parallel copies $D_1'$ and $D_1''$ in $V$ from the boundary of the ball obtained by thickening $D_1$.
	If we isotope $P$ so that the two boundary circles of $P$ which come from $\partial D_1$ coincide with $\partial D_1'$ and $\partial D_1''$, then the disk $P\cup D_1' \cup D_1''$ is isotopic to $D_0$ in $V$. (Since a compression body is irreducible, the sphere $D_0\cup (P\cup D_1' \cup D_1'')$ must bound a $3$-ball in V.) Moreover, if we deformation retract $P$ into a shape of a pair of handcuffs (i.e. it seems like O--O) where two rings come from $\partial D_1'$ and $\partial D_1''$, then the wire connecting two rings corresponds to the arc realizing the band sum of $D_1'$ and $D_1''$.
	This completes the proof of (\ref{corollary-corollary-pants-1}).
	
	Take $V'$ from the components obtained by cutting $V$ along $D_0$ such that $F'=\partial V\cap\partial V'$ is a once-punctured torus.
	By Lemma \ref{corollary-pants}, $\partial D_0\cup \partial D_1$ cuts off a pair of pants $P$ from $F$, i.e. $\partial D_1$ belongs to $F'$.
	Since we can compress $V'$ along $D_1$ and $V$ does not have any $S^2$-component in its minus boundary, $V'$ must be a solid torus and $D_1$ is a meridian disk of $V'$, i.e. $D_1$ is uniquely determined up to isotopy.
	This complete the proof of (\ref{corollary-corollary-pants-2}).
\end{proof}

If we generalize the idea of Lemma \ref{corollary-corollary-pants}, then we get the following lemma.

\begin{lemma}[the Key Lemma]\label{key-lemma}
	Assume $M$, $H$, and $F$ as in Lemma \ref{corollary-pants}.
	Suppose that there are two different weak reducing pairs $(D_0, E_0)$ and $(D',E')$, both weak reducing pairs are weakly compatible, and $E_0$ is non-separating in $W$.
	Then $E'$ must be (a) isotopic to $E_0$ if it is non-separating or (b) a band sum of two parallel copies of $E_0$ if it is separating in $W$.
\end{lemma}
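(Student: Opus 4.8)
The plan is to reduce everything to an inductive analysis along the chain of faces witnessing weak compatibility, using the $V$--$W$ symmetric versions of Lemma \ref{corollary-pants} and Lemma \ref{corollary-corollary-pants}. Since $(D_0,E_0)$ and $(D',E')$ are weakly compatible, there is a finite sequence of $2$-subsimplices $\Delta_0,\dots,\Delta_m$ in $\DVW$, each a D-face or an E-face, with $(D_0,E_0)\subset\Delta_0$, $(D',E')\subset\Delta_m$, and consecutive faces sharing a weak reducing pair. Reading off the shared pairs, I would first extract a walk of weak reducing pairs $Q_0=(D_0,E_0),Q_1,\dots,Q_N=(D',E')$ in which any two consecutive pairs lie in a common face. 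The key structural observation is that the compressing disk of $Q_j$ lying in $W$, call it $E^{(j)}$, is unchanged across a D-face (where the shared disk is the $W$-disk) and changes only across an E-face. Across an E-face the two $W$-disks form a $1$-compatible pair sharing the same $V$-disk, so the $V$--$W$ symmetric form of Lemma \ref{corollary-pants} applies: exactly one of $E^{(j)}$, $E^{(j+1)}$ is separating and the other non-separating in $F$, and their boundaries cobound a pair of pants. In particular the separating and non-separating types genuinely alternate along the walk.

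Next I would run an induction along this walk, maintaining the invariant that $E^{(j)}$ is isotopic to $E_0$ when it is non-separating, and is a band sum of two parallel copies of $E_0$ when it is separating. The base case $Q_0=(D_0,E_0)$ holds since $E_0$ is non-separating and isotopic to itself, and steps across a D-face leave $E^{(j)}$ fixed and preserve the invariant trivially. For a step across an E-face joining $E^{(j)}$ and $E^{(j+1)}$, one of the two disks is separating and the other non-separating. If $E^{(j)}$ is non-separating, hence isotopic to $E_0$ by the invariant, then $E^{(j+1)}$ is separating, and the $W$-side form of Lemma \ref{corollary-corollary-pants}(\ref{corollary-corollary-pants-1}) shows $E^{(j+1)}$ is a band sum of two parallel copies of $E^{(j)}$, hence of $E_0$, which is exactly the invariant.

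The hard part will be the reverse direction of the inductive step: the case in which $E^{(j)}$ is separating, so by the invariant it is a band sum of two parallel copies of $E_0$, and $E^{(j+1)}$ is the non-separating disk of the E-face. Here I would invoke the uniqueness statement of Lemma \ref{corollary-corollary-pants}(\ref{corollary-corollary-pants-2}) in its $W$-side form. Cutting $W$ along the separating disk $E^{(j)}$ and taking the once-punctured-torus side yields a solid torus whose meridian is the unique non-separating compressing disk it admits. Since both $E_0$ and $E^{(j+1)}$ are non-separating disks whose band sum of two parallel copies recovers the fixed disk $E^{(j)}$, both must be this meridian, so $E^{(j+1)}$ is isotopic to $E_0$. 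This uniqueness is precisely what prevents the non-separating $W$-disk from drifting away from $E_0$ as the walk alternates between separating and non-separating disks, and I expect it to be the crux of the argument.

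Finally, applying the invariant to $Q_N=(D',E')$ gives the conclusion: if $E'$ is non-separating it is isotopic to $E_0$, yielding (a), and if $E'$ is separating it is a band sum of two parallel copies of $E_0$, yielding (b); the case $E'=E_0$ is covered by the base of the induction. The only remaining routine verifications are the transcription of Lemma \ref{corollary-pants} and Lemma \ref{corollary-corollary-pants} to the $W$-side, which is immediate from the symmetry of the roles of $V$ and $W$ in the splitting, and the bookkeeping that $E^{(j)}$ is constant precisely across the D-faces of the chain.
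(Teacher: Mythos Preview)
Your argument is correct and rests on the same core idea as the paper's proof: induct along the chain of faces realizing weak compatibility, and at each E-face use the $W$-side version of Lemma~\ref{corollary-corollary-pants} to control the $W$-disk. The ``hard direction'' of your inductive step is exactly the crux, and your justification is sound: once $E^{(j)}$ is a band sum of two parallel copies of $E_0$, the non-separating disk $E_0$ sits in the once-punctured-torus side of $F-\partial E^{(j)}$, hence in the solid torus $W'$ obtained by cutting $W$ along $E^{(j)}$, and therefore coincides with its meridian; the same reasoning applied to $E^{(j+1)}$ via Lemma~\ref{corollary-corollary-pants}(\ref{corollary-corollary-pants-2}) forces $E^{(j+1)}\simeq E_0$.

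Where you differ from the paper is in the bookkeeping. The paper first passes to a minimal-length chain, groups it into maximal blocks $P_0,\dots,P_n$ of D-faces or E-faces, invokes the cluster machinery (Lemmas~\ref{lemma8}, \ref{lemma-cluster}, \ref{lemma-DorE}) to place each block inside a single D- or E-cluster, uses minimality to bound each block to length at most two, and then inducts block by block with separate case analyses for $|P_i|=1$ and $|P_i|=2$. Your step-by-step induction on individual faces bypasses all of this: you never need minimality of the chain, and you never appeal to the cluster lemmas. The trade-off is that the paper's organization makes the global picture (alternating centers and hands of E-clusters) more visible, while your version is shorter and uses strictly fewer auxiliary results.
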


\begin{proof}
	Let $\Delta_0-\cdots-\Delta_m$ be the sequence of $2$-subsimplices in $\DVW(F)$ realizing the relation of two weak reducing pairs and assume that $m$ is such smallest integer.
	Simplify this sequence as the sequence $P=P_0-P_1-\cdots -P_n$ ($n\leq m$), where each $P_i$ is a sequence of only D-faces or only E-faces and assume that if $P_i$ consists of only D-faces, then $P_{i-1}$ and $P_{i+1}$ consists of only E-faces, or vise versa.
	Note that this sequence is ultimately the sequence of weak reducing pairs as well as $2$-subsimplices in $\DVW$.
	Assume that $(D_0, E_0)$ is the initial weak reducing pair of $P_0$ and $(D', E')$ is the terminal weak reducing pair of $P_n$.
		
	 By Lemma \ref{lemma-DorE}, each $\Delta_i$ is contained in a D- or an E-cluster for $0\leq i \leq m$, and so is each $P_i$ for $0\leq i \leq n$.
	Here, if $P_k$ is contained in a D-cluster, then $P_{k+1}$ is contained in a E-cluster for $0\leq k \leq n-1$, or vise versa.
	Moreover, since $P$ consists of minimal number of $2$-subsimplices in $\DVW(F)$ from $(D_0, E_0)$ to $(D', E')$ so that adjacent $2$-subsimplices in $P$ share a weak reducing pair, the number of D- or E-faces in each $P_i$, say $|P_i|$,  must be one or two (see Figure \ref{fig-dcluster}.)
	If the sequence $\Delta_0-\cdots-\Delta_m$ consists of only D-faces, then $E'=E_0$, i.e. the proof ends.
	Therefore, we assume that there exists at least one E-face in the sequence.

	Suppose that some $P_i$ is contained in an E-cluster.
	If $|P_i|=1$, then one E-disk for $P_i$ is non-separating in $W$ (corresponding to the center of this E-cluster) and the other $E$-disk is separating in $W$ (corresponding to a hand of the E-cluster.) by Lemma \ref{corollary-pants}.
	In this case, the separating E-disk of $P_i$ is a band sum of two parallel copies of the non-separating one by Lemma \ref{corollary-corollary-pants}.
	If $|P_i|=2$, then the initial weak reducing pair of $P_i$ (the nearest weak reducing pair of $P_i$ from $(D_0, E_0)$) and terminal weak reducing pair of $P_i$ (the farthest weak reducing pair of $P_i$ from $(D_0, E_0)$) correspond to some hands of the E-cluster, i.e. the E-disks for both weak reducing pairs are separating in $W$.
	In this case, the E-disks of the initial weak reducing pair and the terminal weak reducing pair of $P_i$ are band sums of two parallel copies of the E-disk of the center of this E-cluster by Lemma \ref{corollary-corollary-pants}.

	Suppose that $P_{i_0}$ is the first $P_i$ contained in some E-cluster.
	Let the E-disk of the initial weak reducing pair of $P_{i_0}$ be $E'_0$ and that of the terminal weak reducing pair  of $P_{i_0}$ be $E''_0$.
	If $E'_0\neq E_0$, then some E-face must precede $P_{i_0}$ in the sequence. 
	But this means that there is another $P_j$ contained in some E-cluster by Lemma \ref{lemma-DorE} for some $j<i_0$, which contradicts the choice of $P_{i_0}$.
	Therefore, $E'_0=E_0$.
	Since $E_0$ is non-separating in $W$, we get $|P_{i_0}|=1$ and $E''_0$ is a band sum of two parallel copies of $E_0$ from the observation of the previous paragraph.
	
	Now we will use an induction argument.
	Suppose that the lemma holds for the path $P^{n_j}=P_0-P_1-\cdots-P_{n_j}$ ($0\leq n_j \leq n$) where $P^{n_j}$ contains at least one $P_l$ for $0\leq l\leq n_j$ which belongs to an E-cluster.
	(The subsequence $P^j$ of $P$ for every $0\leq j\leq n$ must be of minimal length in the sense of connecting the initial and terminal weak reducing pairs of $P^j$, otherwise the assumption of the original sequence cannot hold.)
	
	Let $n_{j+1}$ be the smallest integer such that some  $P_\alpha\subset P^{n_{j+1}}$ belongs to an E-cluster for $n_j<\alpha \leq n$.
	Let $E'_\alpha$ be the E-disk of the initial weak reducing pair of $P_\alpha$ and $E''_\alpha$ be that of the terminal weak reducing pair of $P_\alpha$.
	Since $E'_\alpha$ comes from the last $E$-disk of $P^{n_j}$, the induction hypothesis forces $E'_\alpha$ to be isotopic to $E_0$ in $W$ (if it is non-separating in $W$) or a band sum of two parallel copies of $E_0$ (if it is separating in $W$.)

	Consider the case $|P_\alpha|=1$.
	If $E'_\alpha$ is isotopic to $E_0$, then $E''_\alpha$ must be a band-sum of two parallel copies of $E_0$ by Lemma \ref{corollary-corollary-pants}.
	If $E'_\alpha$ is isotopic to a band sum of two parallel copies of $E_0$, then $E''_\alpha$ is non-separating in $W$, i.e. $E''_\alpha$ is the E-disk of the center of the E-cluster containing $P_\alpha$.
	But Lemma \ref{corollary-corollary-pants} forces $E''_\alpha$ to be determined uniquely by $E'_\alpha$.
	Moreover, since $E'_\alpha$ comes from the last E-disk of $P^{n_j}$, $E''_\alpha$ must be the same as the E-disk of the center of the directly previous E-cluster. 
	That is, $E''_\alpha$ is isotopic to $E_0$.

	Consider the case $|P_\alpha|=2$.
	Let $\beta$ be the largest integer such that $P_\beta$ is contained in some E-cluster and $P_\beta\cap P^{n_j}\neq \emptyset$.

	Let $\bar{E}_\beta$ be the E-disk of the terminal weak reducing pair of $P_\beta$ and $\tilde{E}_\beta$ is the E-disk of the center of the E-cluster containing $P_\beta$.
	Then we get $\bar{E}_\beta=E_\alpha'$ by the choice of $\alpha$ and $\beta$.
	Therefore, $\bar{E}_\beta$ is separating in $W$, i.e. $\bar{E}_\beta$ is a band sum of two parallel copies of $\tilde{E}_\beta$.
	But the induction hypothesis forces $\tilde{E}_\beta$ to be isotopic to $E_0$.
	Since the $E$-disk of the center of the E-cluster for $P_\alpha$ is determined uniquely up to isotopy by $E'_\alpha=\bar{E}_\beta$ by Lemma \ref{corollary-corollary-pants}, the E-disk of the center of the E-cluster containing $P_\alpha$ is also isotopic to $\tilde{E}_\beta=E_0$.
	Therefore, we conclude that $E''_\alpha$ is also a band sum of two parallel copies of $E_0$ by Lemma \ref{corollary-corollary-pants}.
	This completes the proof of the induction argument.

	Since $E'$ is the same as the E-disk of terminal weak reducing pair of $P^{n_j}$ for the largest $n_j$, this completes the proof.
\end{proof}


Now we give the proof of Theorem \ref{the-main-theorem}.
	Let $D_0$, $E_0$, $D_1$, and $E_1$ be the disks in the assumption of Theorem \ref{the-main-theorem}.
	Lemma \ref{key-lemma} means that $(D_0, E_0)$ and $(D_1, E_1)$ are not weakly compatible.
	Therefore, if we use Proposition \ref{prop-Bachman}, then the proof ends.\\
	
Now we give the proof of Corollary \ref{corollary-tunnelnumber-two}.
	Since the tunnel number is two and the genus of the splitting is three,  the splitting is unstabilized.
	Let $V$ be the compression body which meets the boundary of the knot exterior.
	Since $D_0$ and $D_1$ come from the cocores of $N(t_1)$ and $N(t_2)$, they are non-separating in $V$.	
	Obviously, they are not isotopic in $V$.
	Therefore, the splitting is critical by Theorem \ref{the-main-theorem}.
	The last statement is obtained directly from Theorem 5.1 of \cite{Bachman1}.
	This completes the proof.

Note that the definition of \textit{``critical surface''} of \cite{Bachman2} is significantly simpler and slightly weaker, than the one given in \cite{Bachman1}.
In other words, anything that was considered critical in \cite{Bachman1} is considered critical here as well.

\section{A connected sum of two 2-bridge knots induces a critical Heegaard splitting of its exterior\label{section-2-bridges}}


\begin{definition}[See Section 2 of \cite{LustigMoriah}]
	Let $K$ be a knot in $S^3$.
	A $2n$-plat projection (see \cite{BurdeZieschang}) gives rise to two \emph{canonical Heegaard splittings} of $S^3 - N(K)$ obtained as follows: 
	Consider first the system of arcs $\rho_1$, $\cdots$, $\rho_{n-1}$ which connect adjacent bottom bridges of $K$ (the bottom tunnels) as indicated Figure \ref{fig-wide}.
	One defines the compression body $V$ to be the union of a collar of $\partial N(K)$ and a regular neighborhood of $\rho_1$, $\cdots$, $\rho_{n-1}$.
	The handlebody $W$ is the complement of $V$ in $S^3-N(K)$. 
	The other Heegaard splitting is defined analogously by using the top tunnel system $\tau_1$, $\cdots$, $\tau_{n-1}$.
\end{definition}

\begin{figure}
	\includegraphics[width=8cm]{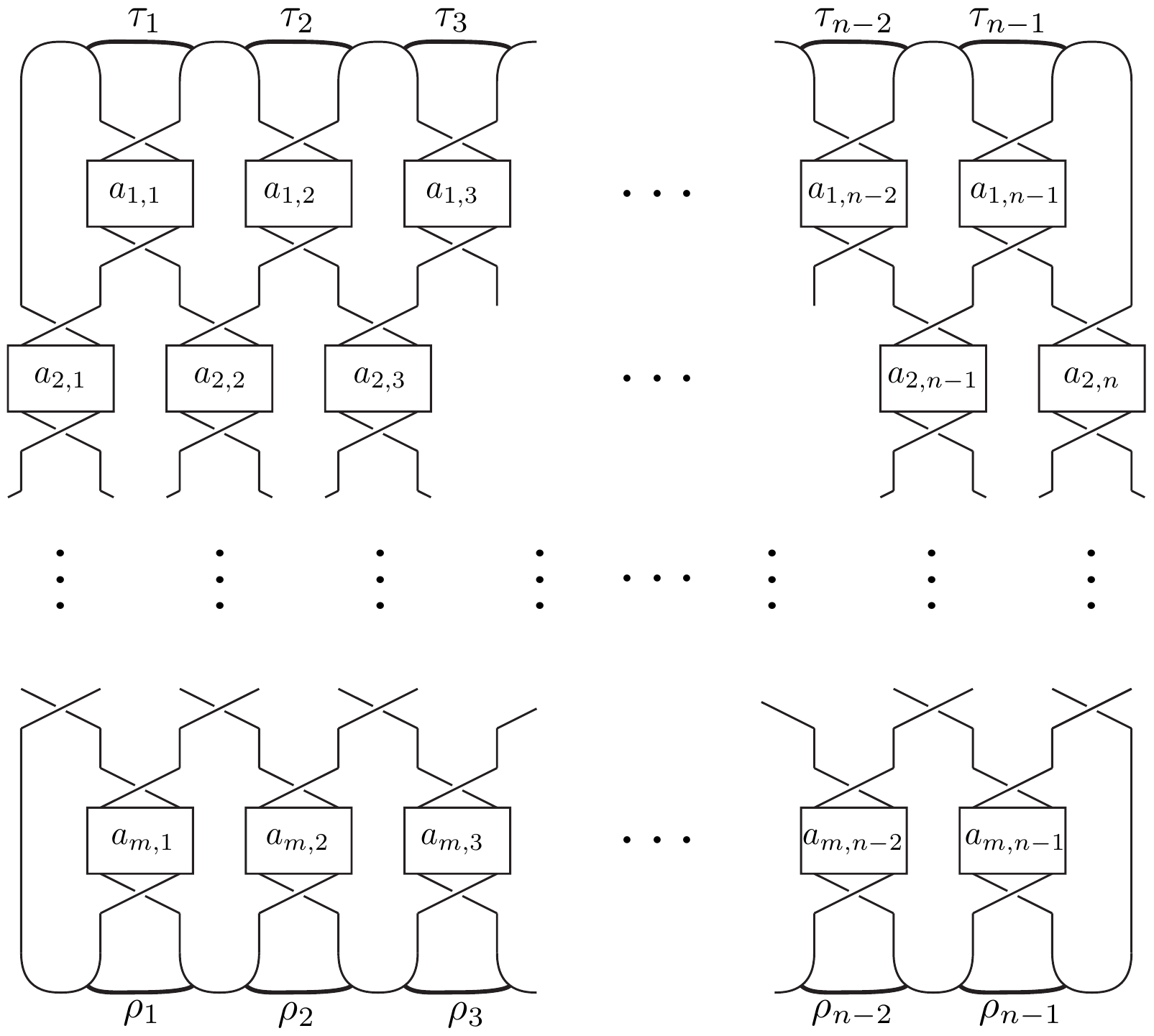}
	\caption{a 2n-plat diagram of $K$. \label{fig-wide} (This is Fig 2 of \cite{LustigMoriah}.)}
\end{figure}

\begin{definition}[Definition 2.1 of \cite{LustigMoriah}]$~$\\[-3ex]
	\begin{enumerate}
		\item A $2n$-braid will be called \emph{wide} if in its standard projection (i.e. every crossing is replaced by a node) there is no monotonically descending path connecting the top of the second strand to the bottom of the $(2n-1)$st, or vice versa. 
		\item A $2n$-plat projection of a knot or link will be called \emph{wide} if the underlying $2n$-braid is wide. 
		\item A knot or link $K\subset S^3$ will be called \emph{wide} if it has a wide $2n$-plat projection so that the corresponding canonical Heegaard splittings are irreducible.
	\end{enumerate}
\end{definition} 

Let $K_1$ and $K_2$ be two $2$-bridge knots.
Consider two standard diagrams of $K_1$ and $K_2$ where the diagram of $K_1$ has $m_1$ 2-braids and that of $K_2$ has $m_2$ 2-braids. 
Let $m=\max(m_1, m_2)$.
We can assume that both diagrams have $m$ 2-braids in a formal sense if we add trivial braids to the shorter diagram.
Isotope two diagrams so that these diagrams are depicted with respect to the same hight function.
In particular, we can assume that the fourth strand of $K_1$ is a vertical path in its projection and so is the first strand of $K_2$.
If we connect the forth strand of $K_1$ and the first strand of $K_2$ as in Figure \ref{fig-k1ck2}, then we get a connected sum $K_1\# K_2$.

\begin{figure}
	\includegraphics[width=6cm]{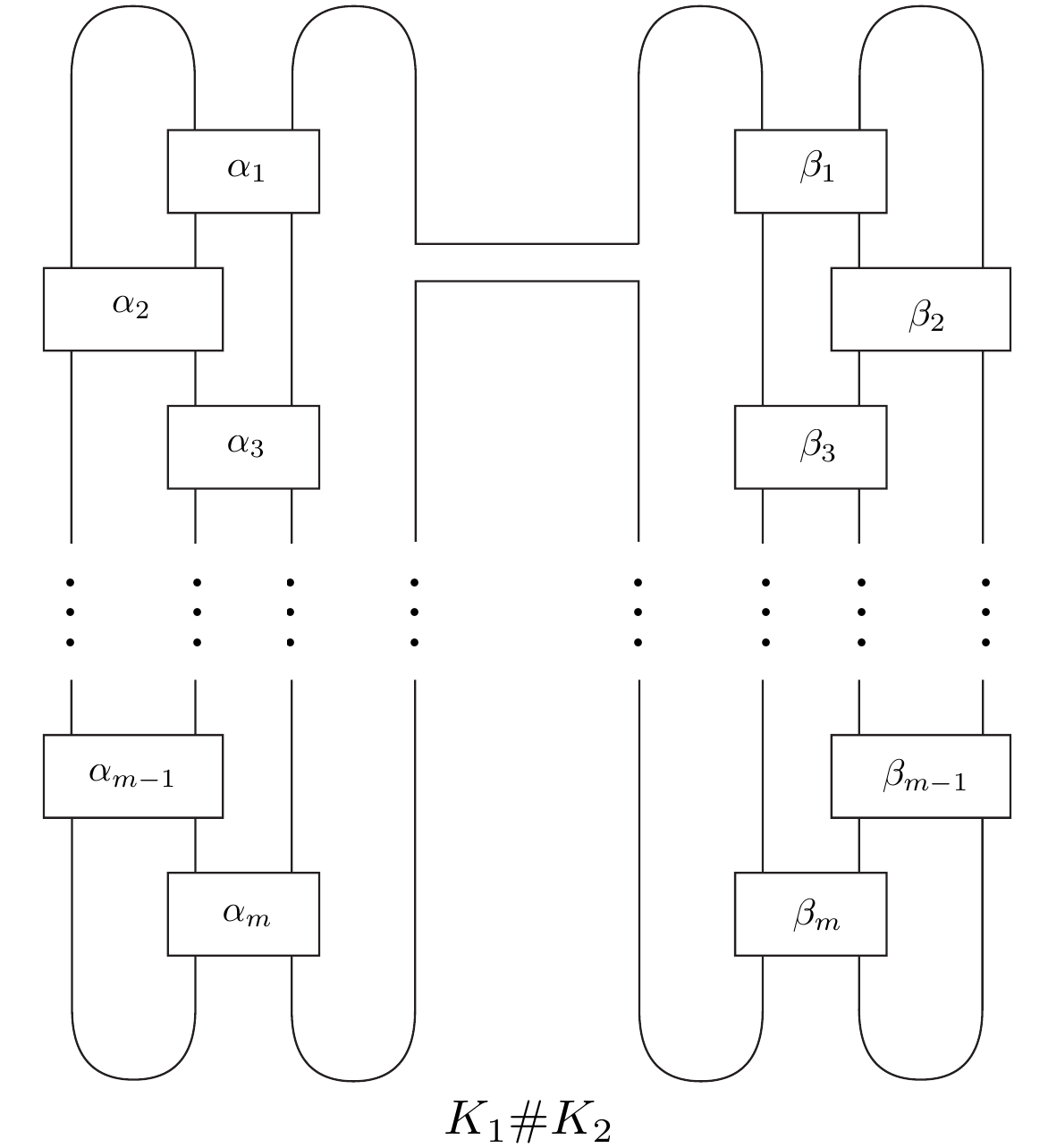}
	\caption{the connected sum of $K_1$ and $K_2$\label{fig-k1ck2}}
\end{figure}
Moreover, if we isotope it as in Figure \ref{fig-6plat}, then we get a $6$-plat projection of $K_1\# K_2$.
Note that $a_{k, 2}=0$ for even $k$ in this $6$-plat projection.

\begin{figure}
	\includegraphics[width=4cm]{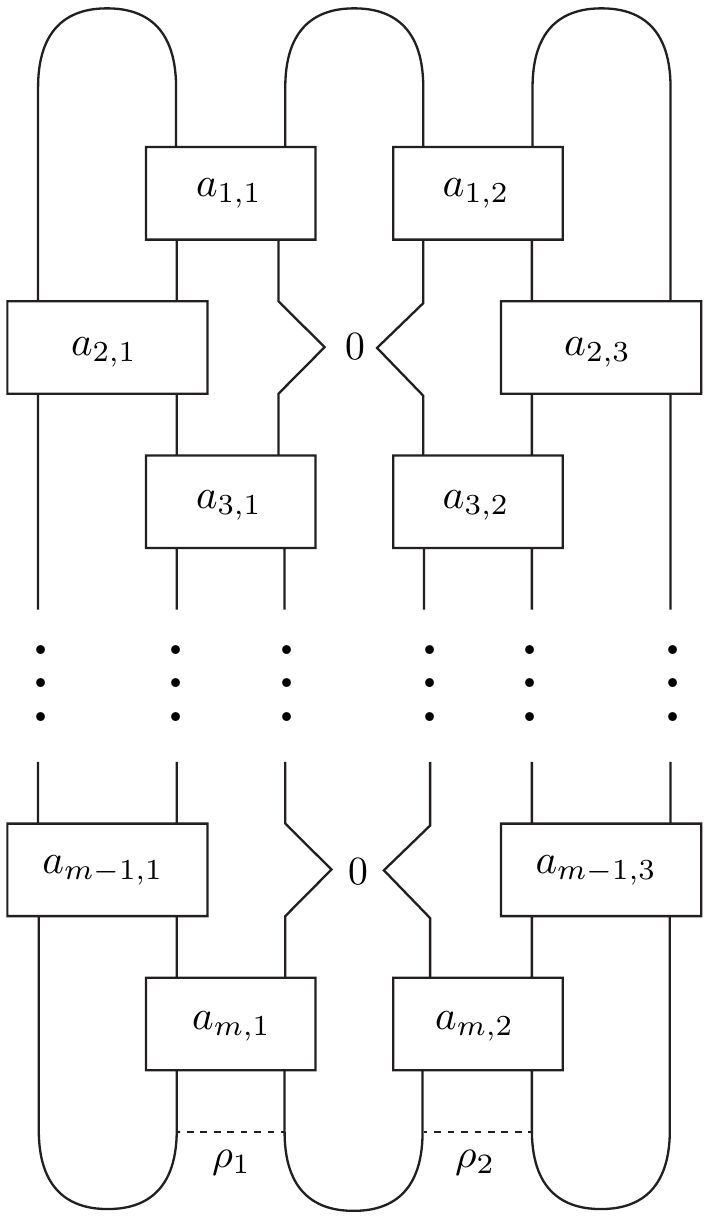}
	\caption{the 6-plat projection of $K_1\# K_2$\label{fig-6plat}}
\end{figure}

Let $T$ be the boundary of the exterior of $K_1\# K_2$.
If we consider two lower tunnels $\rho_1$ and $\rho_2$ for $K_1$ and $K_2$ respectively, then $N(T\cup\rho_1\cup\rho_2)$ gives a compression body $V$ and its complement gives a handlebody $W$, i.e. $\{\rho_1, \rho_2\}$ gives a tunnel system of $K_1\# K_2$.
Hence, we get a Heegaard splitting of the exterior of $K_1\#K_2$.
We call this splitting \emph{the Heegaard splitting of the exterior of $K_1\# K_2$ induced by the lower tunnels of $K_1$ and $K_2$}. 
Since $a_{k,2}=0$ for even $k$, it is easy to see that this $6$-plat projection of $K_1\#K_2$ is wide.

\begin{lemma}
	If $K_1$ and $K_2$ are two-bridge knots, then $K_1\# K_2$ is a wide knot.
\end{lemma}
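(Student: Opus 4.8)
The plan is to check directly that the $6$-plat projection of $K_1\# K_2$ built above (Figure~\ref{fig-6plat}) meets the two requirements in the definition of a wide knot: (i) the underlying $6$-braid is wide, and (ii) the canonical Heegaard splittings determined by this projection are irreducible. Condition (i) is the combinatorial statement the paper already flags as ``easy to see,'' so the substance of the lemma lies in (ii).

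For (i), I would exploit that $K_1\# K_2$ is assembled by joining the fourth strand of $K_1$ to the first strand of $K_2$ (Figure~\ref{fig-k1ck2}): after the isotopy to plat form, the crossings coming from $K_1$ and those coming from $K_2$ are confined to disjoint regions of the braid, with the central column of crossings left empty, which is exactly what the relation $a_{k,2}=0$ for even $k$ records. Since the second and fifth strands lie on opposite sides of this empty central column, a monotonically descending path from the top of the second strand to the bottom of the fifth strand (or vice versa) would have to cross from one region to the other, and the only crossings that could carry it across are the absent central ones. Hence no such path exists, so the $6$-braid, and therefore the plat projection, is wide.

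For (ii), the plan is to invoke the irreducibility criterion for canonical plat splittings from \cite{LustigMoriah}, after verifying that its hypotheses hold for our $6$-plat; here the nontriviality of the two $2$-bridge summands guarantees that the crossing data of $K_1$ and $K_2$ are non-degenerate in the required sense, and the splittings coming from the top and the bottom tunnel systems are treated symmetrically. A more self-contained alternative is to observe that the canonical splitting of $E(K_1\# K_2)$ restricts to the canonical genus-two splitting of each summand exterior $E(K_i)$, each of which is irreducible (being the unique minimal-genus splitting of a nontrivial $2$-bridge knot exterior), and then to derive irreducibility of the amalgamated splitting from the incompressibility of the connect-sum annulus together with the separation of the two disk systems forced by wideness.

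The step I expect to be the main obstacle is (ii), and specifically the degeneracy $a_{k,2}=0$: our $6$-plat is deliberately non-generic in its central column, whereas the Lustig--Moriah irreducibility argument is most naturally stated for plats in which all crossings are present. One must therefore confirm that the absence of the central crossings does not create a reducing sphere for either canonical splitting. I expect this to follow because wideness already forbids the descending paths through which such a reduction would have to manifest, so that the argument of \cite{LustigMoriah} applies despite the missing crossings; making this compatibility explicit is the crux.
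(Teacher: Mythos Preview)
Your identification of what must be shown is correct, and your treatment of (i) is adequate. However, your approach to (ii) is unnecessarily complicated and leaves its key step unresolved. You yourself flag that the Lustig--Moriah irreducibility criterion is most naturally stated for generic plats and that the degeneracy $a_{k,2}=0$ may cause trouble; your proposed workaround (``wideness already forbids the descending paths through which such a reduction would have to manifest'') is a hope rather than an argument, and your alternative via restriction to the summand exteriors is not developed far enough to constitute a proof of irreducibility of the amalgamated splitting.

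The paper sidesteps all of this with a much shorter argument that never revisits the plat combinatorics. A reducible Heegaard splitting of an irreducible $3$-manifold is necessarily stabilized (see \cite{SaitoScharlemannSchultens}). Since a knot exterior is irreducible, it therefore suffices to show the two canonical genus-three splittings are unstabilized. But if either were stabilized, $K_1\# K_2$ would have tunnel number at most one, and tunnel number one knots are prime by \cite{MorimotoSakuma} and \cite{Scharlemann1}, contradicting that $K_1\# K_2$ is a nontrivial connected sum. This three-line argument makes the degeneracy issue you identified as ``the crux'' simply disappear: irreducibility follows from the primeness obstruction for tunnel number one, not from any analysis of the $6$-plat beyond the wideness already established.
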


\begin{proof}
	Since we get a wide $6$-plat projection for $K_1\# K_2$, it is sufficient to show that two canonical Heegaard splittings from the $6$-plat projection are irreducible.
 
	Suppose that any one of both splittings is stabilized.
	This means that $K_1\# K_2$ is tunnel number one knot.
	But every tunnel number one knot must be prime (see \cite{MorimotoSakuma} and \cite{Scharlemann1}), this gives a contradiction.
	Therefore, these Heegaard splittings are unstabilized.
	Moreover, these splitting are irreducible since a reducible splitting forces the manifold to be reducible or the splitting to be stabilized (see 3.4.1 of \cite{SaitoScharlemannSchultens}.)
	This completes the proof.
\end{proof}

	If we use Proposition 2.2 of \cite{LustigMoriah}, then we get the canonical splittings are weakly reducible.
	The idea of finding weak reducing pairs by Lustig and Moriah is described in the proof of the next theorem.

\begin{figure}
	\centering
	\includegraphics[width=5cm]{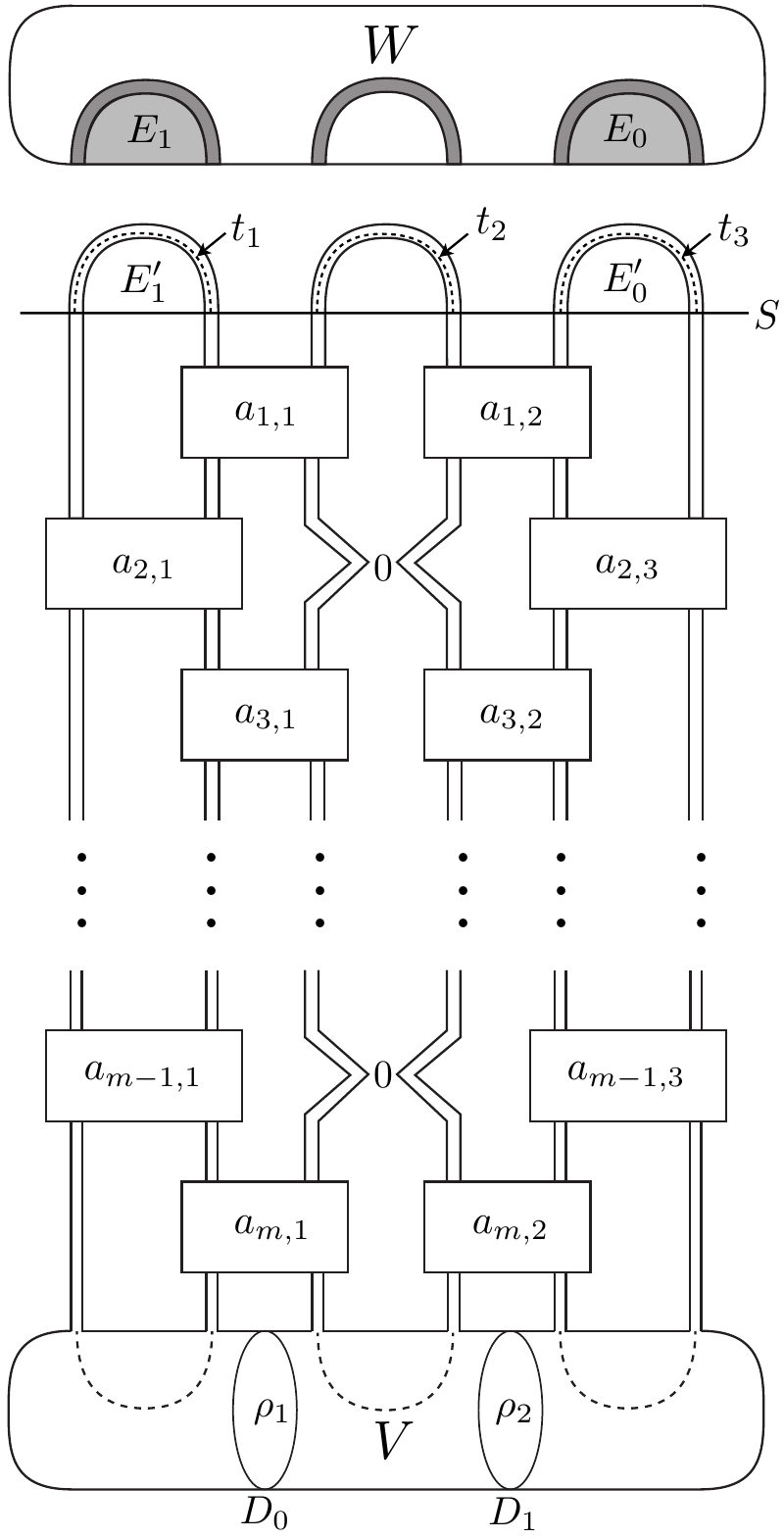}
	\caption{the tunnel system of $K_1\# K_2$ and the induced Heegaard splitting of the exterior
	\label{fig-Heegaard}}
\end{figure}

\begin{theorem}
	Let $K_1$ and $K_2$ be $2$-bridge knots.
	Then the Heegaard splitting of the exterior of $K_1\# K_2$ induced by the lower tunnels of $K_1$ and $K_2$ is critical.
\end{theorem}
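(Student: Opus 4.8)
The plan is to derive the statement from Corollary \ref{corollary-tunnelnumber-two}. The lemma just established shows that the canonical genus three splitting $H=(V,W;F)$ of the exterior of $K_1\#K_2$ coming from the lower tunnels is irreducible and unstabilized; the exterior $M$ itself is irreducible, being the exterior of a knot in $S^3$. Moreover $K_1\#K_2$ is nontrivial and composite, so it is not tunnel number one (tunnel number one knots are prime), and since $\{\rho_1,\rho_2\}$ is a tunnel system its tunnel number is exactly two. Hence $H$ is a splitting of precisely the kind to which Corollary \ref{corollary-tunnelnumber-two} applies, and everything reduces to producing two weak reducing pairs $(D_0,E_0)$ and $(D_1,E_1)$ in which $D_0$ is the cocore of $N(\rho_1)$ and $D_1$ is the cocore of $N(\rho_2)$.

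Accordingly, let $D_0$ and $D_1$ be the meridian disks of the two lower tunnels; as cocores of distinct tunnel handles they are non-separating and mutually non-isotopic in $V$, so once they are realized as members of weak reducing pairs the hypotheses of the corollary (and of the Main Theorem) are met, and the disks $E_i\subset W$ need not be distinct. The core of the proof is thus the construction of the handlebody disks, for which I would follow Lustig and Moriah (Proposition 2.2 of \cite{LustigMoriah}). The wideness of the $6$-plat is the exact combinatorial input: because $a_{k,2}=0$ for even $k$ there is no monotone descending path between the relevant strands, and this is what allows the vertical disk separating the $K_1$-half from the $K_2$-half of the diagram to close up inside $W$ to an essential compressing disk whose boundary misses both $\partial D_0$ and $\partial D_1$. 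I expect this single disk, which reflects the decomposing annulus of the connected sum, to serve as $E_0=E_1$; failing that, one reads off one such disk from each half of Figure \ref{fig-Heegaard}, giving $(D_0,E_0)$ from the $K_1$-side and $(D_1,E_1)$ from the $K_2$-side.

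The step I expect to be the main obstacle is the verification, carried out directly in the $6$-plat diagram, that each $(D_i,E_i)$ is genuinely a weak reducing pair: namely that $\partial E_i$ is essential on $F$ (so that $E_i$ is a true compressing disk for $W$ and not boundary-parallel or inessential), and that $\partial E_i$ can be isotoped off $\partial D_i$. Both properties come down to tracking the crossing data of the plat, and here the identity $a_{k,2}=0$ for even $k$ --- the very condition that made the projection wide --- is what I would use to push $\partial E_i$ away from the strands carrying the tunnels and to certify that it bounds no disk in $F$. Once the two weak reducing pairs are in hand, Corollary \ref{corollary-tunnelnumber-two} immediately yields that $H$ is critical, completing the proof.
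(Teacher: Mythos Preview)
Your overall strategy---reduce to Corollary~\ref{corollary-tunnelnumber-two} by taking $D_0$, $D_1$ to be the cocores of $N(\rho_1)$, $N(\rho_2)$ and then constructing the $E_i$'s via the Lustig--Moriah argument---is exactly the paper's approach. The paper makes the construction explicit: one takes an equatorial $2$-sphere $S$ just below the top bridges, cutting off a $3$-ball $B$ with three unknotted arcs; the handlebody $W$ is ambient isotopic to $W'=B-N(\cup t_i)$ via an isotopy $h_t$ that slides $S$ down through horizontal spheres; two evident essential non-separating disks $E'_0$, $E'_1$ in $W'$ are carried by $h_1$ to $E_0$, $E_1\subset W$; and the condition $a_{k,2}=0$ for even $k$ is precisely what guarantees $D_i\cap E_i=\emptyset$.

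Your primary suggestion, however, cannot work and you should not expect it to. A single disk $E\subset W$ with $\partial E$ disjoint from both $\partial D_0$ and $\partial D_1$ would make $(D_0,E)$ and $(D_1,E)$ into $1$-compatible weak reducing pairs (the cocores $D_0$, $D_1$ are disjoint and non-isotopic), and Lemma~\ref{corollary-pants} then forces one of $\partial D_0$, $\partial D_1$ to be separating in $F$---contradicting that both are tunnel cocores. So the ``vertical decomposing disk'' you describe, even if it closes up to an essential disk in $W$, must meet at least one of $\partial D_0$, $\partial D_1$; the remark that ``the disks $E_i\subset W$ need not be distinct'' is formally true of the hypotheses but vacuous in this setting. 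Your fallback of reading off one disk from each half is the correct picture and is what the paper actually does; note that the paper's $E_0$, $E_1$ are both non-separating in $W$, in contrast to the separating $\bar E^i$'s used for the $(1,1)$-knot argument in Section~\ref{section-1-1-knots}.
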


\begin{proof}
	First, we will find two weak reducing pairs by using the arguments of the proof of Proposition 2.2 of \cite{LustigMoriah}.

	Consider an equatorial $2$-sphere $S$ intersecting $K_1\# K_2$  just below the top bridges and cutting off a $3$-ball $B$ with $3$ unknotted arcs $t_1$, $\cdots$, $t_3$ as indicated in Figure \ref{fig-Heegaard}. 
	The handlebody $W$ is ambient isotopic to the handlebody $W'=B-N(\cup t_i)$, and this isotopy $h_t$ is given by moving the above equatorial $2$-sphere $S=h_0(S)$ monotonically down the braid to a level $h_1(S)$ just above the bottom bridges, through horizontal $2$-spheres.
	In the 6-plat projection, we can find two essential  disks $E'_0$ and $E'_1$ in $W'$ as Figure \ref{fig-Heegaard}.
	Let $D_0$ and $D_1$ be the cocore disks of $N(\rho_1)$ and $N(\rho_2)$ respectively and $E_i=h_1(E'_i)$ for $i=0,1$.
	Each $E_i$ is an essential non-separating disk in $W$ for $i=0,1$.
	Moreover, $D_i\cap E_i=\emptyset$ for $i=0,1$ since $a_{k,2}=0$ for even $k$. 
	Hence, we get the two weak reducing pairs $(D_0, E_0)$ and $(D_1, E_1)$.
	But $D_0$ and $D_1$ are non-separating in $V$ and $D_0$ is not isotopic to $D_1$ in $V$,  this splitting is critical by using Corollary \ref{corollary-tunnelnumber-two}.
\end{proof}

\section{A connected sum of two $(1,1)$-knots induces a critical Heegaard splittings of its exterior \label{section-1-1-knots}}

\begin{definition}
	We say that a curve on a hanldebody $H$ is \textit{primitive} if there is an essential disk in the
handlebody intersecting the curve in a single point.
	An annulus $A$ on a handlebody $H$ is \textit{primitive} if
its core curve is primitive.
%
%
	Note that a curve on a handlebody $H$ is \textit{primitive} if it represents a primitive element in the free group $\pi_1(H)$.
\end{definition}

\begin{definition} (Section 2 of \cite{Moriah2004})
Let $K$ be a connected sum of two knots $K_1$ and $K_2$.
For a given Heegaard splitting $(V_1,V_2)$ for $S^3-N(K)$, we will choose a decomposing annulus $A$ which intersects the compression body $V_1$ in two spanning annuli $A_1^\ast$, $A_2^\ast$ and a minimal collection of disks $\mathcal{D} = \{D_1,\, \cdots,\,D_l\}$.
We can easily see that if we evacuate one side of $A$ from $S^3-N(K)$ corresponding to the nontrivial tangle of $K_2$, then the remaining part is homeomorphic to $E(K_1)$, or vise versa. 
Therefore, we can identify $E(K_i)$ and the corresponding remaining part for $i=1,2$.
When we cut $E(K)$ along $A$, the Heegaard splitting $(V_1, V_2)$ induces Heegaard splittings on both $E(K_1)$ and $E(K_2)$ as follows.
	\begin{enumerate}
		\item $V_1^i=(V_1\cap E(K_i))\cup_{\mathcal{D}\cup A_1^\ast \cup A_2^\ast} (N(A)\cap E(K_i))$, where each $V_1^i$ is a compression body for $i=1,2$.
		\item $V_2^1$ and $V_2^2$ are the two components of $V_2-N(A)$ where each $V_2^i$ is a handlebody for $i=1,2$ and $V_1^i\cap V_2^j=\emptyset$ for $i\neq j$.
		\item Therefore, $(V_1^i, V_2^i)$ for $i=1,2$ is a Heegaard splitting of $E(K_i)$ for $i=1,2$.
	\end{enumerate}
Note that $V_1^1\cap V_1^2 = A$.

Conversely, suppose that there is a Heegaard splitting $(V_1^i, V_2^i)$ of $E(K_i)$ for $i=1,2$ and assume that each $V_1^i$ meets $\partial N(K_i)$ for $i=1,2$.
Choose one meridional annulus in $\partial N(K_1)$ and another one in $\partial N(K_2)$ and identify both annuli.
Then we get the exterior of a connected sum of $K_1$ and $K_2$ as in Figure \ref{fig-connected_sum}, where the decomposing annulus $A$ comes from the identified one.
Assume that the attaching disks for the $1$-handles from each tunnel system of $K_i$ realizing the Heegaard splitting $(V_1^i, V_2^i)$ miss $N(A)$ for $i=1,2$ up to isotopy, including some handle slides.
Since the attaching disks miss $N(A)=A\times I$, we can assume that the $A\times\{0\}$ is a subsurface of $\partial_+ V_1^1$ and $A\times \{1\}$ is also that of $\partial_+ V_1^2$. 

If we remove $N(A)$ from $E(K_i)$, then we get the projection image of the decomposing annulus, say $A_i$, in the boundary of $E(K_i)-N(A)$. 
Then, $A_i$ can be written as
	$$A_i=A_1^{\ast i}\cup A_2^{\ast i}\cup A^{i},$$
	where $A_1^{\ast i}$ and $A_2^{\ast i}$ are two spanning annuli in $E(K_i)$ and $A_i - \operatorname{int}(A_1^{\ast i}\cup A_2^{\ast i})$ is a meridional annulus $A^i$ in $\partial_+ V_1^i = \partial_+ V_2^i$ for $i=1,2$ (if we project $A_i$ into $A$ conversely, then we can regard it as a meridional one, see Figure \ref{fig-connected_sum}.)
	
\begin{figure}
	\centering
	\includegraphics[width=10cm]{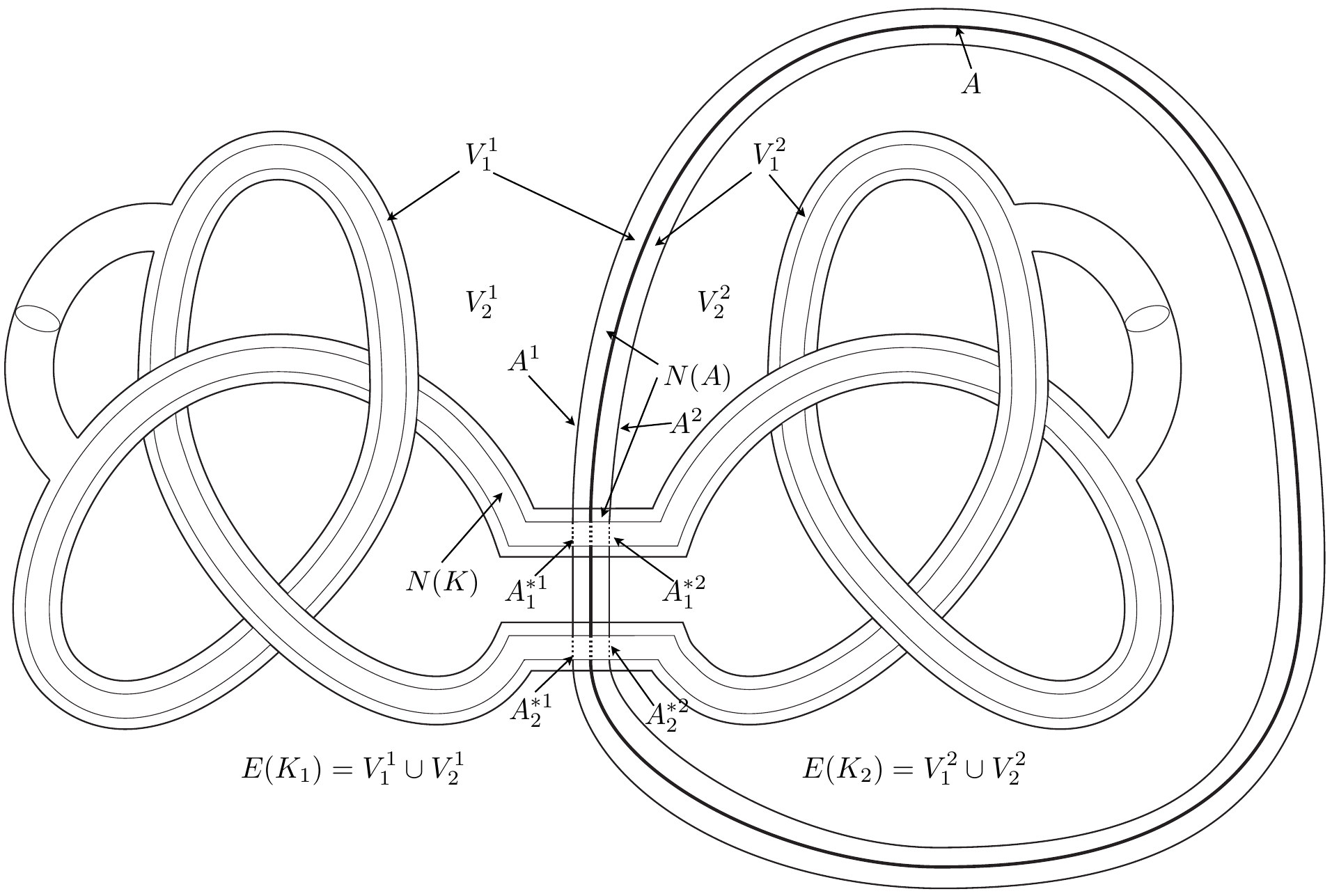}
	\caption{The decomposition of $E(K)$ into $E(K_1)\cup_{A} E(K_2)$ when $\mathcal{D}=\emptyset$.
	\label{fig-connected_sum}}
\end{figure}

	If we remove the $N(A)$ part from $V_1^i$ for $i=1,2$ and glue $V_1^1$ and $V_1^2$ by identifying $A_1^{\ast 1}= A_1^{\ast 2}$ and $A_2^{\ast 1}= A_2^{\ast 2}$, then we get a solid $V_1$.
	Also we glue $V_2^1$ and $V_2^2$ by identifying $A^1$ and $A^2$ and get a solid $V_2$.
	$V_1$ is a compression body since $V_1$ is obtained from $\partial N(K)\times I$ by adding $1$-handles corresponding to the tunnel systems of $K_1$ and $K_2$.
	But $V_2$ is a handlebody if and only if $A^1$ or $A^2$ is a primitive annulus on $V_2^1$ or $V_2^2$ respectively.
	In the case that $V_2$ becomes a handlebody, we will say $(V_1, V_2)$ is \textit{the induced Heegaard splitting} of $E(K)$ induced by $(V_1^i, V_2^i)$ for $i=1,2$ and we get $t(K)\leq t(K_1)+t(K_2)$.
	Moreover, if the tunnel number of $K_i$ is one for each $i=1,2$, then the genus of the induced Heegaard splitting is three.
\end{definition}
	
\begin{theorem}\label{theorem-main-B}
	Let $K_1$ and $K_2$ be tunnel number one knots, $K$ be a connected sum of $K_1$ and $K_2$, and $(V_1^i, V_2^i)$ be a Heegaard splitting of genus two of $E(K_i)$ for $i=1,2$.
	Assume $A^1$ and $A^2$ as in the previous arguments.
	If each $A^i$ is primitive on $V_2^i$ for $i=1,2$, then the induced Heegaard splitting $(V_1, V_2)$ of $E(K)$ is critical.
\end{theorem}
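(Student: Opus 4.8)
The plan is to reduce the statement to Corollary~\ref{corollary-tunnelnumber-two}. Regard $\{t_1,t_2\}$, where $t_i$ is the tunnel of $K_i$ dual to the $1$-handle producing $(V_1^i,V_2^i)$, as a tunnel system of $K=K_1\#K_2$; the induced splitting $(V_1,V_2)$ is precisely the one built from this system. First I would record that $M=E(K)$ is irreducible (a knot exterior in $S^3$), that $(V_1,V_2)$ has genus three (as noted after the construction, since each $t(K_i)=1$), and that it is unstabilized: because $K$ is composite and every tunnel number one knot is prime, $t(K)\ge 2$, while the construction gives $t(K)\le t(K_1)+t(K_2)=2$, so $t(K)=2$ and the genus three splitting realizes the Heegaard genus $t(K)+1=3$, hence cannot be stabilized. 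Thus $(V_1,V_2)=(V,W;F)$ satisfies the hypotheses of Corollary~\ref{corollary-tunnelnumber-two}.

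Next, take $D_0$ and $D_1$ to be the cocore disks of $N(t_1)$ and $N(t_2)$. These are non-separating in $V_1$ and mutually non-isotopic, being cocores of distinct $1$-handles of the compression body $V_1=\partial N(K)\times I\cup(\text{two }1\text{-handles})$. By Corollary~\ref{corollary-tunnelnumber-two} it therefore suffices to produce essential disks $E_0,E_1\subset V_2$ with $\partial E_0\cap\partial D_0=\emptyset$ and $\partial E_1\cap\partial D_1=\emptyset$, so that $(D_0,E_0)$ and $(D_1,E_1)$ are weak reducing pairs with the two tunnel cocores as their $V$-disks.

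The heart of the argument, and the step I expect to be the main obstacle, is to build these $E$-disks from the primitivity hypothesis. Writing $V_2=V_2^1\cup_{\hat A}V_2^2$, where $\hat A$ is the interior annulus obtained by identifying $A^1$ with $A^2$, I would look for $E_0$ on the $K_2$-side and $E_1$ on the $K_1$-side. Since the attaching disks of the tunnels are assumed to miss $N(A)$, the boundary $\partial D_0$ lies in the $K_1$-part $\partial_+V_2^1\setminus A^1$ of $F$, so any disk of $V_2^2$ whose boundary avoids $A^2$ automatically has its boundary in the $K_2$-part $\partial_+V_2^2\setminus A^2$ and hence misses $\partial D_0$; symmetrically for $E_1$ and $D_1$. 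Because $A^2$ is primitive on $V_2^2$, its core is non-separating on $\partial_+V_2^2$, and I would use this, together with a primitive disk dual to $\operatorname{core}(A^2)$, to exhibit an essential disk $E_0\subset V_2^2$ disjoint from $A^2$, and likewise $E_1\subset V_2^1$ from the primitivity of $A^1$.

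The delicate point is that a disk which is essential in $V_2^2$ may become inessential in the amalgamated handlebody $V_2$, since its boundary could become parallel in $F$ to a component of $\partial\hat A$. I would rule this out by choosing $E_0$ with $\partial E_0$ not boundary-parallel to $A^2$, and then verifying, via an innermost-arc analysis of the intersection of a hypothetical compressing disk with $\hat A$, that $\partial E_0$ cannot bound a disk in $F$; here the primitivity of $\hat A$ (which is exactly what makes $V_2$ a handlebody) together with the positive genus of the $K_1$-part of $F$ is what forces essentiality in $V_2$. Once $E_0$ and $E_1$ are in hand, $(D_0,E_0)$ and $(D_1,E_1)$ are weak reducing pairs whose $V$-disks are the two tunnel cocores, and Corollary~\ref{corollary-tunnelnumber-two} completes the proof. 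Note that primitivity of \emph{both} $A^1$ and $A^2$ is used precisely to produce the two required pairs on opposite sides simultaneously.
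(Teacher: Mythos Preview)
Your overall strategy coincides with the paper's: show the induced splitting is genus three and unstabilized (using that composite knots are not tunnel number one), take the two tunnel cocores $D_i\subset V_1$, and pair each with an essential disk in $V_2$ lying on the \emph{opposite} $V_2^j$-side and disjoint from the gluing annulus, then invoke Corollary~\ref{corollary-tunnelnumber-two}.

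The one place where the paper is more concrete, and where your argument would benefit from being less tentative, is the construction and essentiality of the $E$-disks. Rather than appealing abstractly to primitivity and then running an innermost-arc argument in $V_2$, the paper builds the disk explicitly: starting from a primitive disk $E_2^i\subset V_2^i$ meeting $A^i$ in a single essential arc, it takes two parallel copies and band-sums them along an arc of the core of $A^i$ to obtain a \emph{separating} essential disk $\bar E^i\subset V_2^i$ which can then be isotoped off $A^i$. Essentiality of $\bar E^1,\bar E^2$ in the amalgamated handlebody $V_2$ is then immediate, since together they cut $V_2$ into three solid tori (the middle one containing the identified annulus). This sidesteps entirely the ``delicate point'' you flag; your proposed innermost-arc verification would work, but the explicit band-sum plus the three-solid-tori picture is both shorter and more transparent.
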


\begin{proof}
	Since each $A^i$ is primitive on $V_2^i$ for $i=1,2$, we can find an essential disk $E_2^i\subset V_2^i$ for $i=1,2$ such that $A^i\cap E_2^i$ is an essential arc in $A^i$.
	Let us consider two parallel copies $E'_i$ and $E''_i$ of $E_2^i$ as in Figure \ref{fig-E}.
	The core circle of $A^i$ is divided by $\partial E'_i\cup \partial E''_i$ into two arcs $\gamma$ and $\delta$ such that $\partial \gamma=\partial \delta.$
	If we consider the band sums of $E'_i$ and $E''_i$ along $\gamma$ and $\delta$ in $V_2^i$, then at least one must be an essential separating disk in $V_2^i$ since $g(\partial V_2^i)=2$.

	Let this band sum be $\bar{E}^i$ (see Figure \ref{fig-E}.)

\begin{figure}
	\centering
	\includegraphics[width=8cm]{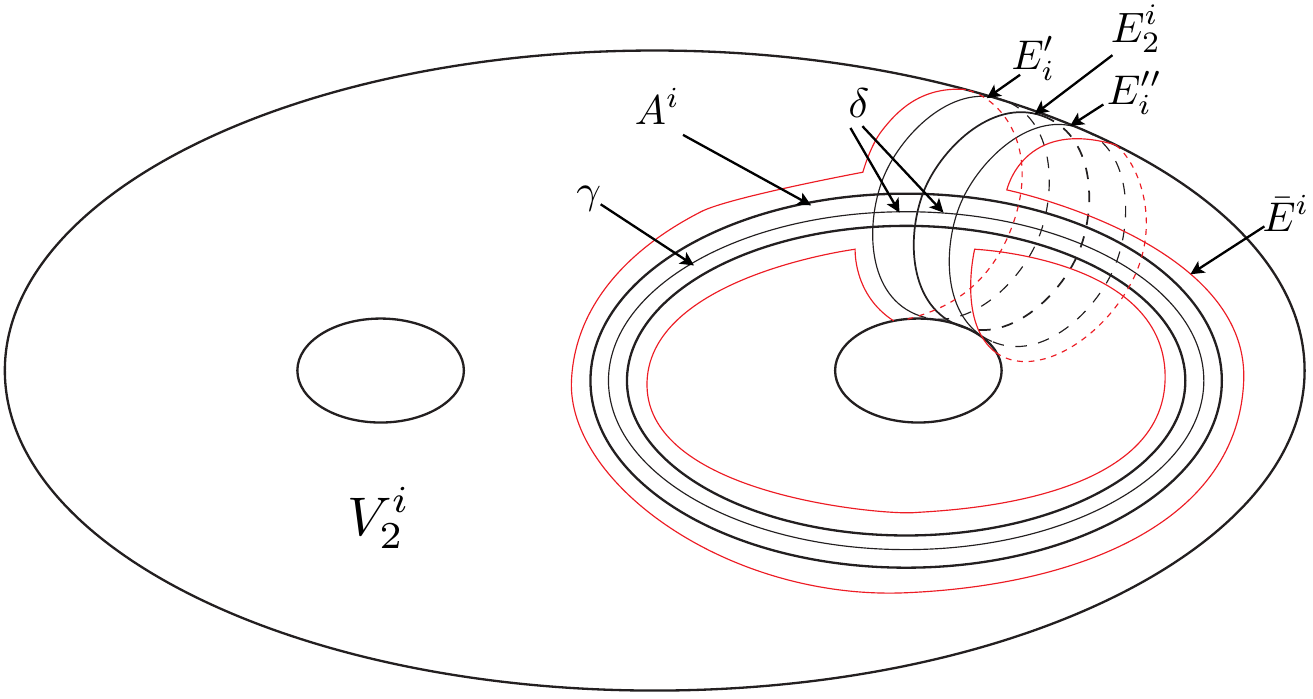}
	\caption{the bandsum $\bar{E}^i$ \label{fig-E}}
\end{figure}

	We can isotope $\bar{E}^i$ in $V_2^i$ so that $\bar{E}^i\cap A^i=\emptyset$.
	Here, each $\bar{E}^i$ is an essential separating disk in $V_2^i$ for $i=1,2$, and so is in $V_2$.
	(We can check that $\bar{E}^1$ and $\bar{E}^2$ cut $V_2$ into three solid tori $V_2'$, $V_2''$, and $V_2'''$, where $V_2'$ ($V_2'''$ resp.) is determined by only $\bar{E}^1$ ($\bar{E}^2$ resp.) and $V_2''$ is determined by both $\bar{E}^1$ and $\bar{E}^2$, i.e. $V_2''$ contains the identified image of the primitive annuli $A^1$ and $A^2$.)
	Let $D_i\subset V_1^i$ be the cocore disk of the tunnel from the Heegaard splitting of $E(K_i)$.
	It is obvious that $D_i\cap \bar{E}^j=\emptyset$ for $i\neq j$ since each $D_i$ also misses $A_i$ for $i=1,2$.
	
	Therefore, we get the two weak reducing pairs $(D_1, \bar{E}^2)$ and $(D_2, \bar{E}^1)$.

	Since the induced Heegaard splitting implies that $t(K_1\# K_2)\leq 2$ and a composite knot cannot be of tunnel number one, we get $t(K_1\# K_2) =2$, i.e. the induced splitting must be unstabilized of genus three.
	Therefore, the induced Heegaard splitting $(V_1, V_2)$ is critical by Corollary \ref{corollary-tunnelnumber-two}.
\end{proof}

In Proposition 2.1 of \cite{Morimoto2000}, Morimoto proved that a tunnel number one knot $K$ is a $(1,1)$-knot if and only if there is a genus two Heegaard splitting $(V_1, V_2)$ in its exterior such that there exist a spanning annulus $A$ in $V_1$ and a compressing disk $D$ in $V_2$ such that  $A$ meets $\partial N(K)$ in a meridian of $K$ and meets $D$ on $\partial_+V_1 = \partial_+V_2$ in a single point transversely.
By using this, we induce the following corollary.

\begin{corollary}
	Let $K_1$ and $K_2$ be two $(1,1)$-knots.
	We can realize a connected sum of $K_1$ and $K_2$ so that there exists a critical Heegaard splitting in its exterior.
\end{corollary}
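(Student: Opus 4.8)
The plan is to feed the genus-two Heegaard splittings supplied by Morimoto's characterization of $(1,1)$-knots directly into Theorem \ref{theorem-main-B}; the only real work is to produce, for each $K_i$, a genus two splitting of $E(K_i)$ whose meridional annulus is primitive. First I would invoke Proposition 2.1 of \cite{Morimoto2000}: since each $K_i$ is a $(1,1)$-knot (and in particular tunnel number one), its exterior $E(K_i)$ admits a genus two Heegaard splitting $(V_1^i, V_2^i)$ together with a spanning annulus $\hat{A}_i \subset V_1^i$ meeting $\partial N(K_i)$ in a meridian and a compressing disk $D^i \subset V_2^i$ such that $\hat{A}_i$ meets $D^i$ on $\partial_+ V_1^i = \partial_+ V_2^i$ in a single transverse point. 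In particular the curve $c_i = \hat{A}_i \cap \partial_+ V_1^i$ is a primitive curve of the handlebody $V_2^i$, with $D^i$ as the witnessing disk.

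Next I would \emph{realize} the connected sum using exactly these data. The meridian along which $\hat{A}_i$ meets $\partial N(K_i)$ is the meridional curve I use to glue $E(K_1)$ to $E(K_2)$, so that the resulting exterior of a connected sum $K = K_1 \# K_2$ carries a decomposing annulus $A$ restricting on each side to the annulus $A_i = A_1^{\ast i} \cup A_2^{\ast i} \cup A^i$ of the construction preceding Theorem \ref{theorem-main-B}, in such a way that the meridional annulus $A^i \subset \partial_+ V_1^i = \partial_+ V_2^i$ is a regular neighborhood, taken inside the Heegaard surface, of the curve $c_i$. Because $K_i$ is tunnel number one, the attaching disks of the tunnel can be isotoped (allowing handle slides) to miss $N(A)$, so this is a legitimate instance of the induced-splitting setup, and by the remark at the end of the construction the induced splitting has genus three.

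With this identification the primitivity hypothesis of Theorem \ref{theorem-main-B} is immediate: the core of $A^i$ is isotopic to $c_i$, which meets $\partial D^i$ in a single point, so each $A^i$ is primitive on $V_2^i$. Applying Theorem \ref{theorem-main-B} then shows that the induced Heegaard splitting $(V_1, V_2)$ of $E(K_1 \# K_2)$ is critical, which is exactly the assertion of the corollary.

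I expect the main obstacle to be the middle step: carefully matching Morimoto's ``spanning annulus meeting a disk once'' against the decomposing-annulus description used by Theorem \ref{theorem-main-B}, namely verifying that the connected sum can genuinely be realized so that the meridional annulus $A^i$ on the Heegaard surface coincides, up to isotopy, with a neighborhood of $c_i$, and that the hypothesis that the attaching disks miss $N(A)$ can be arranged. Once this bookkeeping is carried out, criticality follows with no further argument from Theorem \ref{theorem-main-B}.
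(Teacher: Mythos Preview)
Your proposal is correct and follows essentially the same route as the paper: invoke Morimoto's characterization to obtain, for each $K_i$, a genus two splitting with a spanning annulus meeting a disk in $V_2^i$ once, realize the connected sum along neighborhoods of the corresponding meridians so that the pushed-up annulus $A^i$ on the Heegaard surface has primitive core, and then feed this into Theorem~\ref{theorem-main-B}. The paper's proof is terser (it simply says one can ``use the arguments finding the weak reducing pairs in the proof of Theorem~\ref{theorem-main-B}'' rather than restating the primitivity hypothesis), but your explicit identification of $A^i$ with a neighborhood of $c_i = \hat{A}_i \cap \partial_+ V_1^i$ is exactly the content that makes this work, and your caveat about the bookkeeping in the middle step is well placed.
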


\begin{proof}
	Let $(V_1^i, V_2^i)$ be a Heegaard splitting of $E(K_i)$ realizing the tunnel number of $K_i$ and assume that $V_1^i$ meets $\partial N(K_i)$ for $i=1,2$.
	Let $\bar{A}^i\subset V_1^i$ be the spanning annulus from Morimoto's proposition and $m^i$ be the meridian $\bar{A}^i\cap N(K_i)$ for $i=1,2$.
	If we consider a small neighborhood of $m^i$ in $\partial N(K_i)$, then we get a meridional annulus $A_i$ on $\partial N(K_i)$ for $i=1,2$.
	So if we realize the connected sum by identifying $A_1$ and $A_2$, i.e. the decomposing annulus $A$ is the identified image of these annuli in the exterior of the connected sum, then we can use the arguments finding the weak reducing pairs in the proof of Theorem \ref{theorem-main-B}, and we get the induced splitting is critical.
\end{proof}

Note that we can skip Section \ref{section-2-bridges} if we only want to check the existence of a critical Heegaard splitting in the exterior since a $2$-bridge knot is also a kind of $(1,1)$-knot.
But the choice of weak reducing pairs of Section \ref{section-2-bridges} is different from that of Section \ref{section-1-1-knots}.
Indeed, the weak reducing pairs in Section \ref{section-2-bridges} consist of only non-separating disks in their compression bodies.
But we take separating disks in $V_2$ for the weak reducing pairs in Section \ref{section-1-1-knots}.

\appendix 
\section{an equivalent condition for a weak reducing pair to be determined uniquely by a compressing disk\label{section-unique-wr}}

In this section, we will prove the following theorem.

\begin{theorem}\label{theorem-main}
	Suppose $F$ is an unstabilized, genus three Heegaard surface in a closed, orientable, irreducible 3-manifold.
	Then either $F$ is the amalgamation of two genus $2$ Heegaard surfaces over a torus, or every compressing disk for F belongs to at most one weak reducing pair.
\end{theorem}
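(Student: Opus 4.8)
The plan is to prove the contrapositive of the second alternative: assuming some compressing disk for $F$ belongs to two distinct weak reducing pairs, I will show that $F$ is the amalgamation of two genus two Heegaard surfaces over a torus. So suppose a disk, say $D\subset V$, lies in two different weak reducing pairs $(D,E)$ and $(D,E')$. Since these are different weak reducing pairs sharing the compressing disk $D$, Lemma \ref{lemma-Bachman} produces $1$-compatible weak reducing pairs, i.e. a D-face or an E-face in $\DVW(F)$; after exchanging the roles of $V$ and $W$ if necessary, I may assume these form an E-face, realizing $1$-compatible weak reducing pairs $(D_\ast,E_0)$ and $(D_\ast,E_1)$ with a common disk $D_\ast\subset V$. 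By the $V\leftrightarrow W$ symmetric form of Lemma \ref{corollary-pants}, $\partial E_0$ is separating and $\partial E_1$ is non-separating in $F$, the curves $\partial E_0\cup\partial E_1$ cut off a pair of pants $P\subset F$, and, by the symmetric form of Lemma \ref{corollary-corollary-pants}, $E_0$ is the band sum of two parallel copies of the non-separating disk $E_1$.

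First I would extract from this configuration a weak reducing pair whose two disks are both non-separating and whose boundaries have non-separating union in $F$. The disk $E_1\subset W$ is already non-separating; and since any curve essential in the pair of pants $P$ is parallel to $\partial E_0$ or to $\partial E_1$ (and would otherwise give a reducing sphere in the irreducible $M$), the boundary $\partial D_\ast$ is forced into the once-punctured genus two component of $F-\partial E_0$. Using this together with the rigidity of the E-cluster containing the face (Lemma \ref{lemma-DorE} and the uniqueness part of Lemma \ref{corollary-corollary-pants}), I would promote $D_\ast$ to, or replace it by, a non-separating $V$-disk disjoint from $E_1$, obtaining a weak reducing pair $(\delta_V,\delta_W)$ of two non-separating disks whose boundaries have non-separating union.

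Next I would untelescope the splitting along $(\delta_V,\delta_W)$ in the sense of Scharlemann--Thompson, surgering $F$ simultaneously along $\delta_V$ and $\delta_W$. Because both disks are non-separating with non-separating union, this surgery raises the Euler characteristic by exactly $4$ and keeps the surface connected, so the resulting thin surface $T$ is a single torus, flanked by two thick surfaces of genus two (consistent with $2+2-1=3$). Since $M$ is irreducible and the splitting is unstabilized, the associated generalized Heegaard splitting is reduced, so $T$ is incompressible; the pair-of-pants conclusion of Lemma \ref{corollary-pants} is precisely the local model guaranteeing that the only thin level produced is this torus. Cutting $M$ along the incompressible torus $T$ then yields two pieces $M_1$ and $M_2$, each inheriting a genus two Heegaard splitting from the two thick surfaces, and $F$ is recovered as their amalgamation over $T$, which is the first alternative.

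I expect the main obstacle to be the second step together with the identification of the thin level in the third step: concretely, proving that the E-face really does upgrade to a weak reducing pair of two non-separating disks with non-separating union, and then ruling out every generalized Heegaard splitting of genus three other than the amalgamation over a torus (for instance an incompressible genus two thin level, or a disconnected thin level). The pair-of-pants conclusion of Lemma \ref{corollary-pants}, the uniqueness statements of Lemma \ref{corollary-corollary-pants}, and the Key Lemma are the tools I would use to force the thin level to be a single torus. The remaining direction of the dichotomy is immediate: if no compressing disk lies in two weak reducing pairs, then by Lemma \ref{lemma-Bachman} no two distinct weak reducing pairs share a disk, which is exactly the second alternative.
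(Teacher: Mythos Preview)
Your overall strategy---``some disk lies in two weak reducing pairs $\Rightarrow$ $1$-compatible pairs exist $\Rightarrow$ amalgamation over a torus''---matches the paper's, but the paper's execution is much shorter because it does not redo the second implication. The paper simply invokes Lemma~\ref{corollary-last} (Corollary~4.4 of \cite{JungsooKim1}), which already packages the equivalence ``$H$ is not an amalgamation of genus two splittings over a torus $\Longleftrightarrow$ no $1$-compatible weak reducing pairs exist.'' Together with Lemma~\ref{lemma-Bachman} this gives the contrapositive in two lines; the converse direction (amalgamation $\Rightarrow$ some disk in several pairs) is dispatched by a picture.

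What you are attempting in your middle two paragraphs is essentially a reproof of the hard direction of Lemma~\ref{corollary-last} via Scharlemann--Thompson untelescoping. That is a reasonable plan, but the step you yourself flag as the main obstacle is a genuine gap: from an E-face you must produce a weak reducing pair $(\delta_V,\delta_W)$ of non-separating disks whose boundary union is \emph{non-separating} in $F$. Nothing in Lemmas~\ref{corollary-pants}, \ref{corollary-corollary-pants}, or the Key Lemma forces the union to be non-separating; the paper's own post-proof discussion shows that a weak reducing pair of non-separating disks can equally well yield a thin level consisting of \emph{two} tori (the case $\partial\delta_V\cup\partial\delta_W$ separating), and in that case the conclusion ``amalgamation over a torus'' does not follow. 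To carry out your route you would need at minimum Lemma~\ref{corollary-differentlabel} (to get an adjacent D-face and E-face) and Lemma~\ref{corollary-difffourcurves} (to get the four-disk configuration with disjoint pairs of pants), and then a further argument pinning down the separation type of the union---which is precisely the content hidden inside Lemma~\ref{corollary-last}.

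Finally, your last paragraph is superfluous: the theorem is a disjunction, so establishing ``not (second alternative) $\Rightarrow$ first alternative'' is already the whole statement; there is no ``remaining direction.''
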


First, we introduce the following lemma.

\begin{lemma}[Corollary 3.10 of \cite{JungsooKim1}]\label{corollary-difffourcurves}
	Let $M$ be an irreducible $3$-manifold and $H=(V,W;F)$ be an unstabilized genus three Heegaard splitting of $M$. 
	There are two adjacent $2$-subsimplices $f_1$ and $f_2$ in $\DVW$ such that $f_1$ is a D-face representing the $1$-compatible weak reducing pairs $(\bar{D}, \tilde{E})$ and $(\tilde{D}, \tilde{E})$ and $f_2$ is an E-face representing the $1$-compatible weak reducing pairs $(\tilde{D}, \tilde{E})$ and  $(\tilde{D}, \bar{E})$ if and only if the disks $\bar{D}$, $\tilde{D}\subset V$ and $\bar{E}$, $\tilde{E}\subset W$ hold the following conditions.
 		\begin{enumerate}
			\item \label{corollary-difffourcurves-1} Four boundary curves of the disks represent different isotopy classes in $F$.
			\item \label{corollary-difffourcurves-2}One of $\partial \bar{D}$ and $\partial \tilde{D}$ ($\partial \bar{E}$ and $\partial \tilde{E}$ resp.) is separating and the other is non-separating in $F$.	
			\item \label{corollary-difffourcurves-4} $\partial\bar{D}\cup \partial \tilde{D}$ cuts off a pair of pants from $F$, and so does $\partial\bar{E}\cup \partial \tilde{E}$. 
			Moreover, both pairs of pants are disjoint in $F$ if the four disks are all disjoint.
			\item \label{corollary-difffourcurves-3} Either (A) the four disks are all disjoint, or (B) we can replace one separating disk among these disks by another separating disk in the same compression body so that the resulting four disks are all disjoint and satisfying the conditions (\ref{corollary-difffourcurves-1}), (\ref{corollary-difffourcurves-2}), and (\ref{corollary-difffourcurves-4}).
		\end{enumerate}
\end{lemma}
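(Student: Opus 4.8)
\textit{Proof proposal.} The statement is a biconditional, so the plan is to prove the two implications separately, throughout identifying a $2$-simplex of $\DVW$ with the $1$-compatible weak reducing pairs it carries (via Lemma \ref{lemma2-JungsooKim2012-A}) and using that $F$ has no reducing sphere, since $M$ is irreducible and $H$ is unstabilized, so $H$ is irreducible. For the forward direction, suppose the adjacent faces $f_1=\{\bar D,\tilde D,\tilde E\}$ and $f_2=\{\tilde D,\tilde E,\bar E\}$ exist. Being $2$-simplices of $\DVW$, the three disks of $f_1$ are mutually disjoint and so are the three disks of $f_2$; hence the only pair among the four disks that need not be realized disjointly is $(\bar D,\bar E)$. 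Applying Lemma \ref{corollary-pants} to the two weak reducing pairs of the D-face $f_1$ gives that exactly one of $\partial\bar D,\partial\tilde D$ is separating and that $\partial\bar D\cup\partial\tilde D$ cuts off a pair of pants, and its $V\leftrightarrow W$ mirror applied to $f_2$ gives the analogous statements for $\partial\tilde E,\partial\bar E$; these are precisely (2) and the first sentence of (4). For (1), within each side the two boundary curves are non-isotopic because one is separating and one is not, and a curve bounding a disk in $V$ cannot be isotopic to one bounding a disk in $W$, for otherwise the two disks would glue to an essential $2$-sphere, contradicting the irreducibility of $H$; hence the four isotopy classes are distinct.

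It remains to establish (3) and the last clause of (4) in the forward direction. If $\bar D$ and $\bar E$ can be isotoped disjoint we are in case (A) and all four disks are simultaneously disjoint; there I would deduce $P_1\cap P_2=\emptyset$ from the last clause of Lemma \ref{corollary-pants}, since the once-punctured genus-two region cut off by the separating disk, with $\partial\tilde E$ removed, admits no $V$-compression other than parallel ones, forcing the curves of the opposite face, hence $P_2$, into the complementary region of $P_1$. If instead $\bar D$ and $\bar E$ are forced to meet, I would invoke Lemma \ref{corollary-corollary-pants}: the separating disk among the four is a band sum of two parallel copies of the non-separating disk on its side, and since those parallel copies are already disjoint from the opposite face, the intersections live only along the connecting band; re-routing the band inside the pair of pants, in the complement of the arcs cut out by the opposite disk, produces a new separating disk in the same compression body that is disjoint from all the others and still spans the same type of face. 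This is case (B), and the re-routed disks continue to satisfy (1), (2), and (4).

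Conversely, assume (1), (2), (4), (3). Using (3) I would first pass, if necessary, to the replacement of the relevant separating disk so that the four disks become simultaneously disjoint, noting that the replacement keeps them in the same compression bodies and preserves (1), (2), (4); the faces produced below are then carried by these (possibly replaced) disks. With all four disjoint, each of $(\bar D,\tilde E)$, $(\tilde D,\tilde E)$, $(\tilde D,\bar E)$ is a weak reducing pair, being a pair of disjoint essential disks on opposite sides of $F$. The pairs $(\bar D,\tilde E)$ and $(\tilde D,\tilde E)$ share $\tilde E$ and satisfy $\partial\bar D\not\simeq\partial\tilde D$ by (1)--(2), so by Lemma \ref{lemma2-JungsooKim2012-A} they are $1$-compatible and span a D-face $f_1$; symmetrically $(\tilde D,\tilde E)$ and $(\tilde D,\bar E)$ span an E-face $f_2$. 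Finally $f_1$ and $f_2$ share the weak reducing pair $(\tilde D,\tilde E)$, so they are adjacent, which is the desired conclusion.

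The delicate step is the band-rerouting of case (3)(B): one must show that the connecting band of the separating disk can always be isotoped off the opposite disk while remaining inside the pair of pants and still yielding an essential separating band-sum disk that spans the required face. Verifying that this replacement is possible, and that in the reverse direction the faces built from the replaced disks are legitimately of the asserted type, is where the real work lies; the remaining verifications are bookkeeping on top of Lemmas \ref{corollary-pants} and \ref{corollary-corollary-pants}.
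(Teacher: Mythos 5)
You should first note that the paper contains no proof of this statement: it is imported verbatim as Corollary 3.10 of \cite{JungsooKim1} and used as a black box, so your proposal can only be judged on its own merits, not against an in-paper argument. The routine parts of your proposal are fine: conditions (1), (2) and the first sentence of the pants condition do follow from Lemma \ref{corollary-pants} applied to each face, together with the observation that an unstabilized splitting of an irreducible manifold is irreducible (so no curve on $F$ bounds disks on both sides), and you correctly isolate $(\bar D,\bar E)$ as the only possibly intersecting pair. It is worth making explicit, though, that since $\bar D\subset V$ and $\bar E\subset W$, their intersection is exactly $\partial\bar D\cap\partial\bar E\subset F$, so the whole problem lives on the surface; your phrasing suggests intersections of disk interiors, which cannot occur.

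The genuine gap is the mechanism you propose for case (B), and it provably fails as stated. Put $\partial\bar D$ and $\partial\bar E$ in minimal position and let $P$ be the pair of pants cut off by $\partial\bar D\cup\partial\tilde D$, with boundary $\partial\bar D$ plus two copies of $\partial\tilde D$. Since $\partial\bar E\cap\partial\tilde D=\emptyset$, every arc of $\partial\bar E\cap P$ has both endpoints on $\partial\bar D$, and by minimality (no bigons) every such arc is essential in $P$. But in a pair of pants, every essential arc with both endpoints on a single boundary component separates the other two boundary components --- which are precisely the two copies of $\partial\tilde D$ that your band core must join. Hence whenever $\partial\bar D\cap\partial\bar E\neq\emptyset$ there is no band core ``inside the pair of pants'' avoiding $\bar E$: the replacement disk must be built from a band sum whose core leaves $P$ and runs through the genus-two side of $\partial\bar D$, and then one must separately prove such an arc exists in the complement of $\partial\tilde E\cup\partial\bar E$, that the resulting disk is essential and separating, and that it still spans the required face and pants configuration --- none of which your sketch addresses. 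Two further gaps: you tacitly assume the separating disks are $\bar D$ and $\bar E$ (equivalently, that the shared pair $(\tilde D,\tilde E)$ is non-separating); this is necessary for (B) to make sense, since replacing $\tilde D$ or $\tilde E$ cannot remove an intersection between $\bar D$ and $\bar E$, but you never prove it. And in the converse direction in case (B) your faces are spanned by the replaced disks, whereas the statement asserts faces representing pairs containing the original four disks; your parenthetical ``carried by these (possibly replaced) disks'' silently changes what is being proved. The case (A) claim that pairwise disjointness can be achieved simultaneously is also asserted rather than argued, though that is repairable by standard means (e.g.\ geodesic representatives plus innermost-disk arguments in the compression bodies).
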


We can imagine a situation as in Figure \ref{fig-1-3} if there exist four disks satisfying the four conditions of Lemma \ref{corollary-difffourcurves} and they are all disjoint. The thick curves are separating and the thin curves are non-separating in $F$.

\begin{figure}
	\includegraphics[width=8cm]{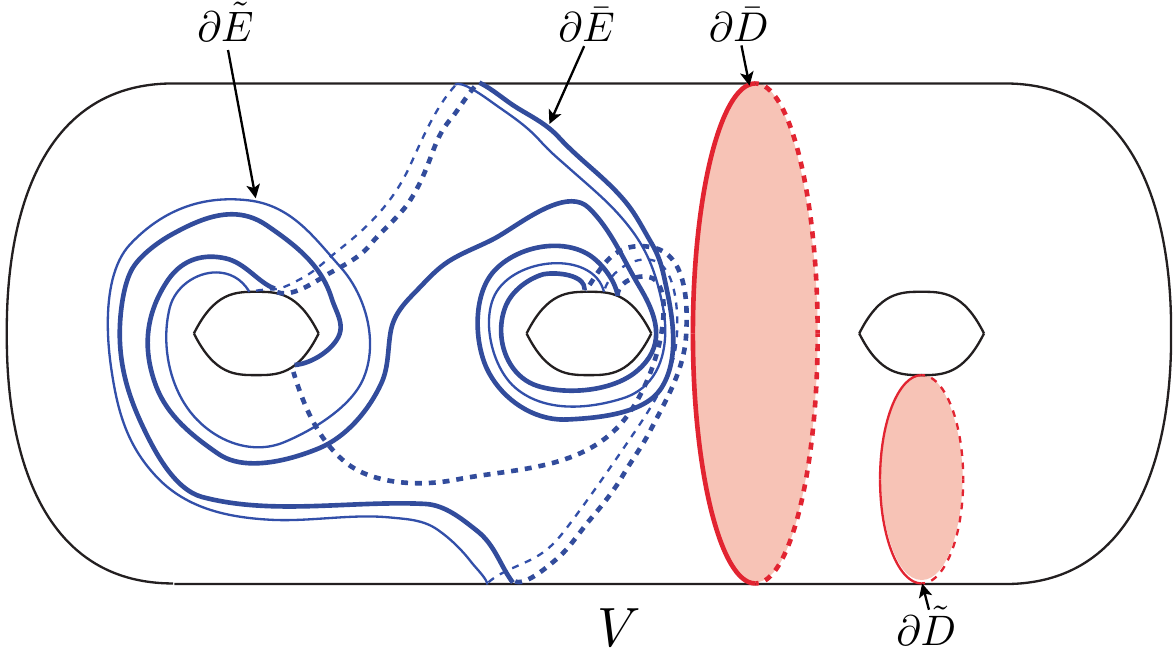}
	\caption{An example of the boundaries of four disks satisfying the four conditions of Lemma \ref{corollary-difffourcurves}\label{fig-1-3}}
\end{figure}

The following lemma guarantees a D-face and an E-face sharing a weak reducing pair when we can choose $1$-compatible weak reducing pairs.

\begin{lemma}[Corollary 4.3 of \cite{JungsooKim1}] \label{corollary-differentlabel}
	Assume $M$, $H$, and $F$ as in Lemma \ref{corollary-pants}, and add the assumption that $M$ is closed.
	If we can choose $1$-compatible weak reducing pairs, then $\DVW$ must have a D-face and an E-face such that one shares a weak reducing pair with the other.
\end{lemma}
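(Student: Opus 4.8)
The plan is to realize the conclusion through the equivalence of Lemma~\ref{corollary-difffourcurves}: to exhibit a D-face and an E-face sharing a weak reducing pair it suffices to produce four disks $\bar{D},\tilde{D}\subset V$ and $\bar{E},\tilde{E}\subset W$ satisfying conditions (\ref{corollary-difffourcurves-1})--(\ref{corollary-difffourcurves-3}) of that lemma, the shared pair being $(\tilde{D},\tilde{E})$. First I would use the hypothesis that $M$ is closed, so that both $V$ and $W$ are genus-three handlebodies; this restores the full $V\leftrightarrow W$ symmetry and lets me assume without loss of generality that the given $1$-compatible weak reducing pairs are realized by a D-face $f$ (if only an E-face is available, the identical argument runs with the roles of $V$ and $W$ interchanged, which is legitimate precisely because neither side carries a nonempty negative boundary).

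From $f$ I extract its data by Lemmas~\ref{corollary-pants} and~\ref{lemma8}: writing $f$ as the $1$-compatible pairs $(\bar{D},\tilde{E})$ and $(\tilde{D},\tilde{E})$, one of $\bar{D},\tilde{D}$ is separating and the other non-separating, so I take $\tilde{D}$ non-separating and $\bar{D}$ separating; then $\partial\bar{D}\cup\partial\tilde{D}$ cuts off a pair of pants contained in the once-punctured torus component $T$ of $F-\partial\bar{D}$, whence $\partial\tilde{D}\subset T$, while the final clause of Lemma~\ref{corollary-pants} places $\partial\tilde{E}$ in the once-punctured genus-two component $G$. Thus $\partial\tilde{D}$ and $\partial\tilde{E}$ are separated by $\partial\bar{D}$, a fact I will exploit to keep the disk I build disjoint from $\tilde{D}$.

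The heart of the argument is then to construct a fourth disk $\bar{E}\subset W$ with $\partial\bar{E}\subset G$ that is $1$-compatible with $\tilde{E}$. I would split into two cases according to the type of $\tilde{E}$ in the handlebody $W$. If $\tilde{E}$ is non-separating, I band-sum two oppositely oriented parallel copies of $\tilde{E}$ along a short arc lying in $G$; as in part~(\ref{corollary-corollary-pants-1}) of Lemma~\ref{corollary-corollary-pants} this yields an essential separating disk $\bar{E}$ of $W$ with $\partial\bar{E}\subset G$ and $\partial\tilde{E}\cup\partial\bar{E}$ bounding a pair of pants. If $\tilde{E}$ is separating, then $\partial\tilde{E}$ separates $F$ and, since $W$ is a handlebody, cuts off a solid torus $U$ whose $F$-side is a once-punctured torus $F_1$; because $T$ lies on the opposite side of $\partial\tilde{E}$ we get $F_1\subset G$, and I take $\bar{E}$ to be the unique meridian of $U$, a non-separating disk with $\partial\bar{E}\subset F_1\subset G$, exactly as in part~(\ref{corollary-corollary-pants-2}). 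In either case $\partial\bar{E}\subset G$ is disjoint from $\partial\tilde{D}\subset T$, so $(\tilde{D},\bar{E})$ is a weak reducing pair and $\{\tilde{E},\bar{E}\}$ has one separating and one non-separating member bounding a pair of pants. Finally I verify conditions (\ref{corollary-difffourcurves-1})--(\ref{corollary-difffourcurves-3}): the four boundary curves are pairwise non-isotopic (the two D-curves lie in $T$, the two E-curves in $G$, and within each pair one is separating and the other not), both pairs cut off pairs of pants lying in the disjoint subsurfaces $T$ and $G$, and all four curves are simultaneously disjoint, so alternative~(A) of condition~(\ref{corollary-difffourcurves-3}) holds outright. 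Lemma~\ref{corollary-difffourcurves} then produces the desired adjacent D-face and E-face sharing $(\tilde{D},\tilde{E})$.

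The step I expect to be the main obstacle is the construction of $\bar{E}$, and in particular justifying that it is a genuine essential compressing disk of $W$ that can be taken disjoint from $\tilde{D}$; this is exactly where closedness is indispensable, since it is the handlebody structure of $W$ that guarantees both the essential separating band-sum in the non-separating case and the solid-torus summand, hence a meridian, in the separating case. The same structure fails on a compression-body side, which is why the hypothesis is that $M$ is closed rather than merely irreducible; the remaining bookkeeping---checking that $\partial\bar{E}$ is not isotopic to the other three curves and that the two pairs of pants are disjoint---is routine once the separation of $\partial\tilde{D}$ and $\partial\tilde{E}$ by $\partial\bar{D}$ is in hand.
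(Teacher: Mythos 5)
Your overall strategy is sound, and it should be said up front that this paper contains no proof to compare against: the lemma is imported verbatim as Corollary 4.3 of \cite{JungsooKim1}. That said, your construction reassembles exactly the paper's own toolkit: starting from the given D-face $(\bar{D},\tilde{E})$, $(\tilde{D},\tilde{E})$ (with $\bar{D}$ separating, $\tilde{D}$ non-separating, $\partial\tilde{D}$ in the once-punctured torus $T$ and $\partial\tilde{E}$ in the once-punctured genus-two piece $G$, all per Lemma \ref{corollary-pants}), you manufacture the adjacent E-face by taking $\bar{E}$ to be a band sum of two parallel copies of $\tilde{E}$ when $\tilde{E}$ is non-separating, and the meridian of the solid torus cut off by $\tilde{E}$ when it is separating. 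Both constructions are precisely those of Lemma \ref{corollary-corollary-pants} (and the meridian trick reappears after Theorem \ref{theorem-main}, justified there by Lemma 3.1 of \cite{JungsooKim1}). One simplification you could make: once $\bar{E}$ is built with $\partial\bar{E}\subset\operatorname{int}(G)$, the E-face $\{(\tilde{D},\tilde{E}),(\tilde{D},\bar{E})\}$ follows directly from the definition of $1$-compatibility ($\tilde{E}\cap\bar{E}=\emptyset$, non-isotopic since one is separating and one is not, and both pairs are weak reducing pairs), so the detour through all four conditions of Lemma \ref{corollary-difffourcurves} is harmless but unnecessary.

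There is one concrete gap. In the separating case, your justification that $F_1\subset G$ --- ``because $T$ lies on the opposite side of $\partial\tilde{E}$'' --- is not yet an argument: a priori $\partial\tilde{E}$ could be isotopic to $\partial\bar{D}=\partial G$, in which case the once-punctured torus side of $\partial\tilde{E}$ is $T$ together with an annulus and $F_1\not\subset G$, so the meridian boundary would not lie in $G$ at all. You must rule this out explicitly: if compressing disks on opposite sides of $F$ had isotopic boundaries, then $\bar{D}\cup\tilde{E}$ would yield a reducing sphere, and a reducible splitting of an irreducible manifold is stabilized (see 3.4.1 of \cite{SaitoScharlemannSchultens}), contradicting the hypothesis; only after this exclusion is the once-punctured torus side of $\partial\tilde{E}$ forced into $\operatorname{int}(G)$. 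The same one-line observation is what actually underwrites the ``routine'' non-isotopy checks you defer for condition (\ref{corollary-difffourcurves-1}) of Lemma \ref{corollary-difffourcurves}, so it deserves to be stated once rather than waved at. A smaller slip is your closing diagnosis of where closedness enters: the band-sum disk is essential in any compression body --- the boundary count shows $\partial\bar{E}$ splits the genus-three surface $F$ into a once-punctured torus and a once-punctured genus-two piece no matter how the band is chosen --- so closedness is genuinely needed only in the separating case (a separating essential disk in a genus-three compression body with nonempty minus boundary need not cut off a solid torus) and for the WLOG interchange of $V$ and $W$. Neither point undermines the architecture; with the isotopy exclusion inserted, the proof is complete.
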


Now we consider some equivalent conditions when we can choose only a $0$-compatible weak reducing pair when $M$ is closed. 

\begin{lemma}[Corollary 4.4 of \cite{JungsooKim1}]\label{corollary-last}
	Assume $M$, $H$, and $F$ as in Lemma \ref{corollary-pants}, and add the assumption that $M$ is closed.
	Then the following three statements are equivalent.
	\begin{enumerate}
		\item \label{cm1} There is no weak reducing pair such that the disks are separating in their handlebodies.
		\item \label{cm2} $H$ cannot be represented as an amalgamation of genus two splittings along a torus.
		\item \label{cm3} We cannot choose $1$- or more compatible weak reducing pairs.
	\end{enumerate}
\end{lemma}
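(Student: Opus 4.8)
The plan is to prove the three statements equivalent by establishing a cycle of implications among their \emph{negations}, namely $\neg(\ref{cm3})\Rightarrow\neg(\ref{cm1})\Rightarrow\neg(\ref{cm2})\Rightarrow\neg(\ref{cm3})$, which forces all three negations to be equivalent and hence all three statements. Since $M$ is closed, both $V$ and $W$ are genus three handlebodies, so for a compressing disk ``separating in the handlebody'' coincides with ``having separating boundary in $F$''; I use the two interchangeably throughout. For the first implication, suppose we can choose $1$-compatible weak reducing pairs. By Lemma \ref{corollary-differentlabel} (the closed case) $\DVW$ contains a D-face and an E-face sharing a weak reducing pair, so Lemma \ref{corollary-difffourcurves} supplies disks $\bar D,\tilde D\subset V$ and $\bar E,\tilde E\subset W$ satisfying its four conditions. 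Condition~(\ref{corollary-difffourcurves-2}) gives that exactly one of $\bar D,\tilde D$ and exactly one of $\bar E,\tilde E$ is separating; call these $D^\ast$ and $E^\ast$. If condition~(\ref{corollary-difffourcurves-3}) holds in case (A), all four disks are disjoint, so $D^\ast\cap E^\ast=\emptyset$, and by condition~(\ref{corollary-difffourcurves-1}) their boundaries are non-isotopic; thus $(D^\ast,E^\ast)$ is a weak reducing pair of two separating disks, i.e.\ $\neg(\ref{cm1})$. In case (B) one first replaces the prescribed separating disk to make all four disjoint while retaining the relevant conditions, and then the same argument produces a weak reducing pair of two separating disks.

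For $\neg(\ref{cm1})\Rightarrow\neg(\ref{cm2})$, let $(D,E)$ be a weak reducing pair with $D\subset V$ and $E\subset W$ both separating. Each disk cuts its genus three handlebody into a solid torus and a genus two handlebody (the only nontrivial partition), so $\partial D$ and $\partial E$ are disjoint separating curves of ``genus one type'' on $F$. Compressing $F$ simultaneously along $D$ and $E$ yields a closed surface $\hat F$ with $\chi(\hat F)=\chi(F)+4=0$; because $M$ is irreducible and $\partial D,\partial E$ are essential of genus one type, no sphere component arises, and the Euler characteristic count forces every component of $\hat F$ to be a torus. Since $H$ is unstabilized and $M$ is irreducible, the weak reduction is nontrivial, so the untelescoping of a weakly reducible splitting (Casson--Gordon together with Scharlemann--Thompson) yields an incompressible component $T$ among these tori which separates $M$. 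The resulting generalized Heegaard splitting has $T$ as its thin surface and a genus two thick surface on each side, and amalgamating these two genus two splittings (Schultens) recovers $H$. Hence $H$ is an amalgamation of two genus two splittings over the torus $T$, so $(\ref{cm2})$ fails.

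For $\neg(\ref{cm2})\Rightarrow\neg(\ref{cm3})$, suppose $H$ is the amalgamation of two genus two splittings over a torus $T$. The amalgamation structure provides exactly the disks whose compression exposes $T$: a separating disk $D\subset V$ cutting off a solid torus $V_1\subset V$, and a separating disk $E\subset W$ with $E\cap D=\emptyset$ and $\partial E$ contained in the genus two component of $F\setminus\partial D$. Let $D_1\subset V_1$ be a meridian disk of $V_1$. Then $D_1$ is non-separating in $V$, is disjoint from both $D$ and $E$, and is not isotopic to $D$ in $V$. Therefore $(D,E)$ and $(D_1,E)$ share the disk $E$ and satisfy condition~(2) in the definition of $n$-compatible weak reducing pairs, so they are $1$-compatible and $(\ref{cm3})$ fails. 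Combining the three implications closes the cycle and gives the stated equivalence.

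The main obstacle lies in the second implication. The genus and Euler characteristic bookkeeping cheaply produces a union of tori, but upgrading one of them to an \emph{incompressible, non-peripheral} torus in $M$, and then verifying that the two complementary pieces carry genus two Heegaard splittings whose amalgamation reproduces $H$, is the real work; here one must handle the possibility that $\hat F$ contains two or three tori of which only one is essential, and invoke the untelescoping and amalgamation theory to identify the essential torus and the two genus two sides. By contrast, the combinatorial implication $\neg(\ref{cm3})\Rightarrow\neg(\ref{cm1})$ and the extraction step in $\neg(\ref{cm2})\Rightarrow\neg(\ref{cm3})$ are routine once Lemmas \ref{corollary-differentlabel} and \ref{corollary-difffourcurves} and the solid-torus description of a separating disk from Lemma \ref{corollary-corollary-pants} are in hand.
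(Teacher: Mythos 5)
A preliminary remark: the paper does not prove this lemma at all --- it is imported verbatim as Corollary~4.4 of \cite{JungsooKim1} --- so your proposal can only be judged on its own terms. Your overall structure (the cycle $\neg(\ref{cm3})\Rightarrow\neg(\ref{cm1})\Rightarrow\neg(\ref{cm2})\Rightarrow\neg(\ref{cm3})$) is the natural one, and two of the three implications are sound. The first is a correct application of Lemma~\ref{corollary-differentlabel} and Lemma~\ref{corollary-difffourcurves}, including the case~(B) replacement, together with the standard fact that a disk in a handlebody is separating if and only if its boundary separates the boundary surface. The third matches what the paper itself asserts about Figure~\ref{fig-amalgamation} (the paper exhibits $1$-compatible pairs $(D,E)$, $(D,E')$ sharing $D$; you share $E$ and vary $D$ via the meridian of the cut-off solid torus --- equivalent by symmetry, and your verification via Lemma~\ref{corollary-corollary-pants} is fine).

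The genuine gap is in $\neg(\ref{cm1})\Rightarrow\neg(\ref{cm2})$, and you flag it yourself without closing it. Casson--Gordon produces an incompressible component of the compressed surface only for a suitably chosen (maximal) weak reduction, not for an arbitrary given pair $(D,E)$; your assertion that ``the untelescoping \dots yields an incompressible component $T$'' is therefore unjustified as stated, and the subsequent identification of $T$ as the thin surface with a genus two thick surface on each side is asserted rather than proved. But note that the incompressibility is a red herring: statement~(\ref{cm2}) concerns a representation of $H$ as an amalgamation, which does not require the gluing torus to be essential, so the implication can be done by hand. Since $H$ is unstabilized and $M$ irreducible, $\partial E$ cannot be isotopic to $\partial D$ in $F$ (otherwise $\partial D$ bounds on both sides and $H$ is reducible, hence stabilized), which forces $\partial E$ to lie in the genus two component of $F-\partial D$ and to cut it into a once-punctured torus and a twice-punctured torus; this is also exactly what rules out the sphere component in your Euler characteristic count, where your one-line reason is incomplete. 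Writing $V=V_t\cup_D V_2$ and $W=W_t\cup_E W_2$ ($V_t$, $W_t$ solid tori, $V_2$, $W_2$ genus two handlebodies), the torus $T$ obtained by capping the middle twice-punctured torus with one copy of $D$ and one of $E$ separates $M$ into $M_A=V_t\cup W_2$ and $M_B=V_2\cup W_t$; each piece carries an explicit genus two splitting (e.g.\ $M_A=\bigl((T\times I)\cup V_t\bigr)\cup W_2$, where attaching the solid torus $V_t$ to $T\times I$ along the disk $D$ gives a compression body with $\partial_-=T$), and amalgamating these two splittings along $T$ visibly recovers $H$. With that substitution your ``real work'' dissolves and the cycle closes; as submitted, the middle implication does not stand.
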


Now we prove Theorem \ref{theorem-main}.
Suppose that $F$ is not an amalgamation of two genus $2$ surfaces along a torus.
If $F$ is strongly irreducible, then we get the conclusion.
Assume that $F$ is weakly reducible and consider a weak reducing pair $(D,E)$ containing a compressing disk $D$. 
If there is another weak reducing pair $(D, E')$ containing $D$, then we can choose $1$-compatible weak reducing pairs for $F$ by Lemma \ref{lemma-Bachman}.
But this contradicts Lemma \ref{corollary-last}.
Therefore, the weak reducing pair $(D,E)$ is the unique one containing the compressing disk $D$.

Conversely, suppose that $F$ is an amalgamation of two genus $2$ surfaces along a torus.
Then we can easily choose $1$-compatible weak reducing pairs.
(In Figure \ref{fig-amalgamation}, we can find $1$-compatible weak reducing pairs $(D, E)$ and $(D, E')$.)
That is, there is a compressing disk contained in two or more weak reducing pairs.\\
This completes the proof of Theorem \ref{theorem-main}.\\

\begin{figure}
	\includegraphics[width=12cm]{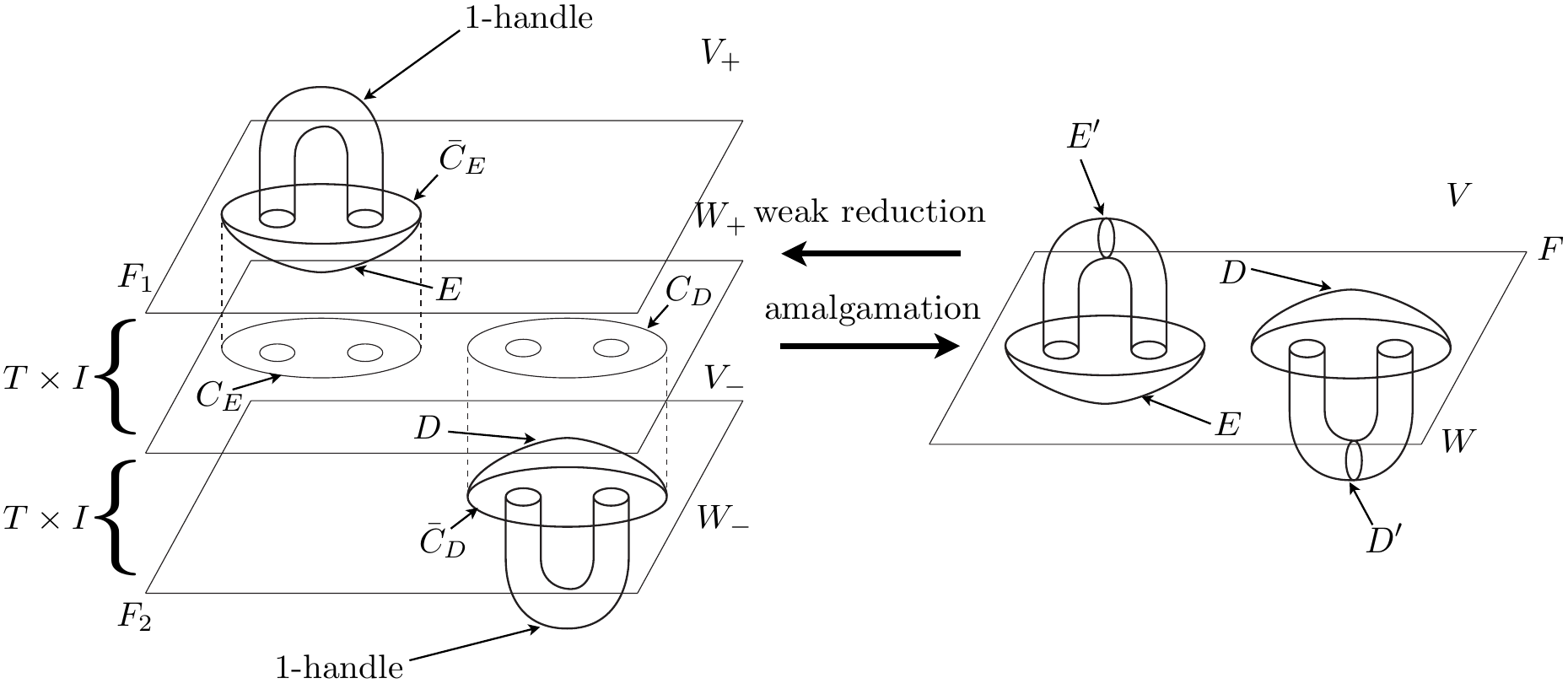}
	\caption{If we amalgamate two thick surfaces along a torus, then we can find a weak reducing pair such that the disks are separating.\label{fig-amalgamation}}
\end{figure}

In the remaining part of this section, we give an interpretation of Theorem \ref{theorem-main}.

Assume $M$, $H$, and $F$ as in Lemma \ref{corollary-pants}, and add the assumption that $M$ is closed.
In the case when we can choose $1$-compatible weak reducing pairs, we get a D-face and an E-face in $\DVW$ such that one shares a weak reducing pair with the other by Corollary \ref{corollary-differentlabel}.
Moreover, we can take the disjoint four disks $\bar{D}$, $\tilde{D}$, $\bar{E}$, and $\tilde{E}$ satisfying the four conditions of Corollary \ref{corollary-difffourcurves}.
That is, we can take a weak reducing pair $(\tilde{D}, \tilde{E})$ which consists of only non-separating disks.
Moreover, if there is a weak reducing pair $(D,E)$ such that $D$ is separating in $V$, then we can take a non-separating disk $D'$ from the meridian disk of the solid torus $V'$ obtained by cutting $V$ along $D$, and we can guarantee $D'\cap E=\emptyset$ by Lemma 3.1 of \cite{JungsooKim1}.
This means that there exist $1$-compatible weak reducing pairs $(D,E)$ and $(D',E)$ sharing $E$.
Hence, if we cannot choose $1$-compatible weak reducing pairs, then every weak reducing pair must consists of only non-separating disks.
In summary, we can choose a weak reducing pair which consists of non-separating disks in any case.

If we untelescope a genus three, unstabilized Heegaard splitting $H=(V,W;F)$ of a closed, irreducible $3$-manifold by a weak reducing pair $(D,E)$ which consists of non-separating disks, then we get the following two cases after getting rid of some impossible cases for irreducible manifolds,

	\begin{enumerate}[(a)]
		\item The thin surface is a torus, i.e. $\partial D\cup \partial E$ does not separates $F$ into several pieces.
		\item The thin surface consists of two tori, i.e. $\partial D\cup \partial E$ cuts $F$ into two twice-punctured tori.
	\end{enumerate}

Therefore, Theorem \ref{theorem-main} means that either (a) for some compressing disk, we can choose two or more weak reducing pairs containing it or (b) for every compressing disk, we can choose at most one weak reducing pair containing it.
Moreover, this also means that a pair of thin tori is uniquely determined up to isotopy by only one compressing disk (as well as by the weak reducing pair) in the case (b) (see figure \ref{fig-twocases}.)
Note that each of $D$ and $E$ is the $1$-handle connecting the neighborhood of two tori from the minus boundary of its compression body in the case (b) (consider the standard genus three splitting of $T^3$ and a weak reducing pair of this splitting.) In Lemma 6 of \cite{JJ}, Johnson proved that a weak reducing pair is determined uniquely by a compressing disk for the standard genus three splitting of $T^3$ and Theorem \ref{theorem-main} is the generalization of the Johnson's Lemma.

\begin{figure}
	\includegraphics[width=11cm]{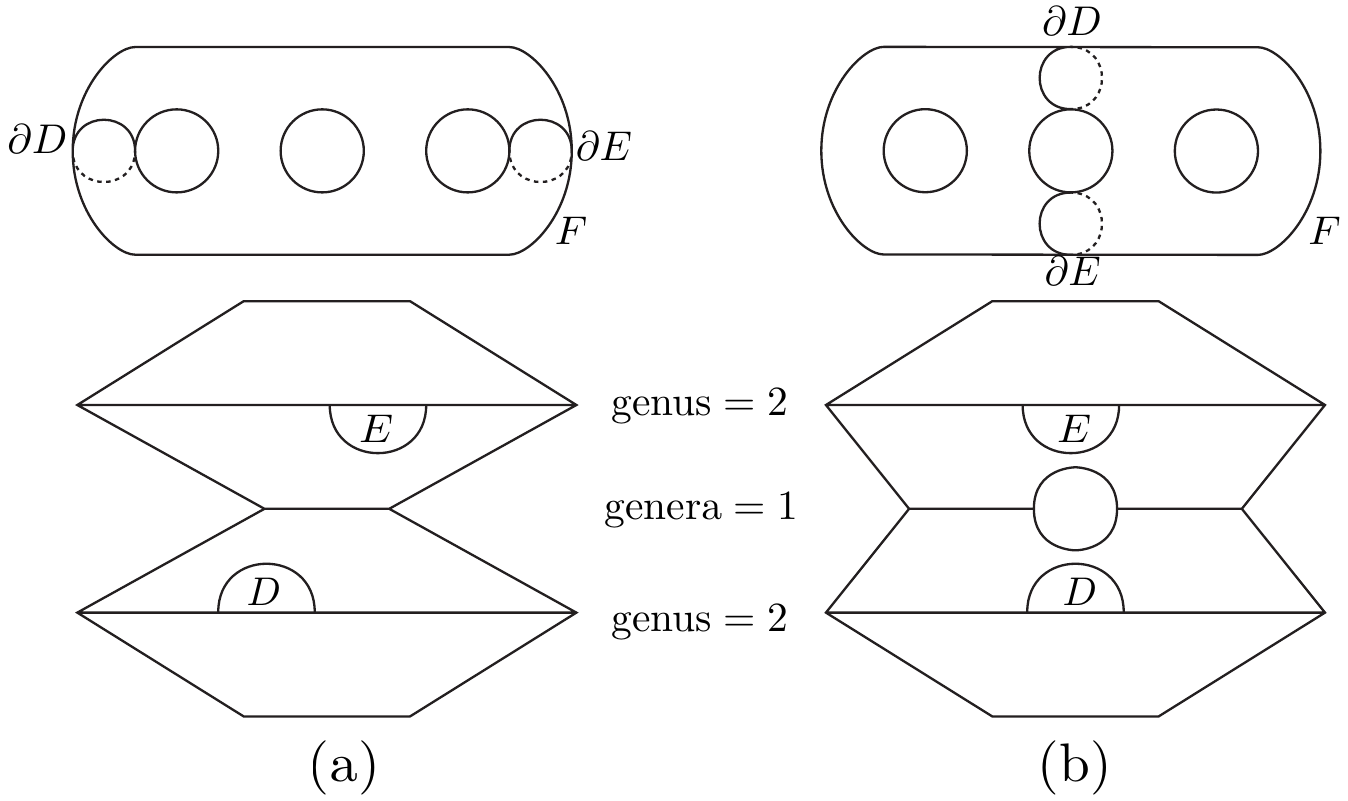}
	\caption{The two cases of Theorem \ref{theorem-main}\label{fig-twocases}}
\end{figure}

\section*{Acknowledgments}
	The author is grateful to Prof. Jung-Hoon Lee for helpful conversations.
	The author is also grateful to someone for suggesting Theorem \ref{theorem-main} to me as a corollary of the result of \cite{JungsooKim1}.
	The author was supported by the National Research Foundation of Korea (NRF) grant funded by the Korean Government (2010-0019516).

\bibliographystyle{agsm}

\end{document}